\Crefname{figure}{Figure}{Figures}
\newcommand{\R}{\mathbb{R}}
\newcommand{\Z}{\mathbb{Z}}
\newcommand{\Q}{\mathbb{Q}}
\newcommand{\N}{\mathbb{N}}
\newcommand{\puiseux}{\mathbb{K}}
\DeclareMathOperator*{\val}{\mathsf{val}}
\DeclareMathOperator*{\sign}{\mathrm{sign}}
\DeclareMathOperator*{\bigland}{\bigwedge}
\DeclareMathOperator*{\biglor}{\bigvee}
\newcommand{\dgraph}{\vec{\mathcal{G}}}
\newcommand{\vertices}{V}
\newcommand{\edges}{E}
\newcommand{\Max}{\mathrm{Max}}
\newcommand{\Min}{\mathrm{Min}}
\newcommand{\Rand}{\mathrm{Rand}}
\newcommand{\Maxvertices}{\vertices_{\Max}}
\newcommand{\Minvertices}{\vertices_{\Min}}
\newcommand{\Randvertices}{\vertices_{\Rand}}
\newcommand{\Win}{\mathrm{Win}}
\newcommand{\Lose}{\mathrm{Lose}}
\newcommand{\payoff}{r}
\title{Reducing Stochastic Games to Semidefinite Program Feasibility}
\author{Manuel Bodirsky}{Institut f\"ur Algebra, TU Dresden, Germany}{manuel.bodirsky@tu-dresden.de}{https://orcid.org/0000-0001-8228-3611}{The author received funding from the ERC (Grant Agreement no. 101071674, POCOCOP). Views and opinions expressed are however those of the authors only and do not necessarily reflect those of the European Union or the European Research Council Executive Agency.}
\author{Georg Loho}{FU Berlin, Germany \and U Twente, The Netherlands}{georg.loho@math.fu-berlin.de}{https://orcid.org/0000-0001-6500-385X}{}
\author{Mateusz Skomra}{LAAS-CNRS, Université de Toulouse, CNRS, Toulouse, France}{mateusz.skomra@laas.fr}{https://orcid.org/0000-0001-5650-5559}{The author has been supported by European Union’s HORIZON–MSCA-2023-DN-JD programme under the Horizon Europe (HORIZON) Marie Skłodowska-Curie Actions, grant agreement 101120296 (TENORS).}
\authorrunning{M. Bodirsky, G. Loho, and M. Skomra}
\keywords{Mean-payoff games, stochastic games, semidefinite programming, max-plus-average constraints, max-atom problem}
\begin{document}

\maketitle

\begin{abstract}
    We present a polynomial-time reduction from max-plus-average constraints to the feasibility problem for semidefinite programs. This shows that Condon's simple stochastic games, stochastic mean payoff games, and in particular mean payoff games and parity games can all be reduced to semidefinite programming. 
\end{abstract}

\section{Introduction}
There are two important clusters of computational problems that are not known to be in the complexity class P: the first is related to numeric computation, and contains the sums-of-square-roots problem~\cite{GGJ76,Eisenbrand2024}, the Euclidean shortest path problem, PosSLP~\cite{ABKM08}, and the feasibility problem for semidefinite programs~\cite{nesterovnemirovskibook,Ramana}. The second cluster is related to tropical geometry, and contains for instance the model checking problem for the propositional $\mu$-calculus~\cite{EmersonJutla,emerson_jutla_sistla}, parity games~\cite{MartinBorel,MostkowskiGames}, mean payoff games~\cite{EhrenfeuchtMycielski}, and/or scheduling~\cite{and-or-scheduling}, stochastic mean payoff games~\cite{andersson_miltersen}, simple stochastic games~\cite{condon}, tropically convex and 
max-plus-average constraints~\cite{BodirskyMaminoTropical}. 
So far, no polynomial-time reduction from a problem of one of the clusters to a problem from the other cluster was known. We show that all of the mentioned problems from the second cluster 
can be reduced in polynomial time 
to the feasibility problem for semidefinite programs from the first cluster.

Semidefinite programming is a generalisation of linear programming with many important algorithmic applications in discrete and continuous optimisation, both from a theoretical and a practical perspective~\cite{GroetschelLovaszSchrijver,ben-tal_nemirovski,lasserre_book2009}.
However, already the feasibility problem for semidefinite programs is not known to be in P. 
By Ramana's duality~\cite{Ramana}, the problem is either in the intersection of NP and coNP, or outside of the union of NP and coNP. 
The semidefinite feasibility problem falls into the existential theory of the reals, which is known to be in PSPACE~\cite{Canny:1988:AGC:62212.62257}. 
However, it has been observed by Khachiyan that the smallest feasible solution to an SDP might be of doubly exponential size (see, e.g.,~\cite{KhachiyanPorkolab}), which is an obstacle for the polynomial-time algorithmic methods known for linear programming. In fact, it is possible to reduce the \emph{PosSLP problem} (testing positivity of a value computed by a \emph{Straight Line Program}) to semidefinite programming~\cite{Tarasov20082070}. 
PosSLP has been introduced in~\cite{ABKM08}, motivated by the `generic task of numerical computation'. For example, the sums-of-square-roots problem, which is a numeric computational problem not even known to be in NP and the barrier for polynomial-time tractability for numerous problems in computational geometry, such as the Euclidean shortest path problem, has a polynomial-time reduction to PosSLP~\cite{ABKM08}. 

\emph{Mean payoff games} are turn-based two-player games on graphs~\cite{EhrenfeuchtMycielski,ZwickPaterson}; it is a famous open problem in theoretical computer science whether there is a polynomial-time algorithm to decide for a given graph which of the players has a winning strategy. Finding the winning region in mean payoff games is polynomial-time equivalent to various other computational problems in theoretical computer science, for instance 
scheduling with and-or-constraints~\cite{and-or-scheduling}, the max-atoms problem~\cite{Max-atoms}, as well as solvability of max-plus systems and tropical linear feasibility~\cite{MaxPlus,polyhedra_equiv_mean_payoff}. 
The latter can be seen as the tropical analog of testing feasibility of linear inequalities (which for classical geometry is known to be in P, e.g., via the ellipsoid method~\cite{Khachiyan,GroetschelLovaszSchrijver}). 

Furthermore, there is a simple reduction from \emph{parity games} to mean payoff games by using the priorities of the parity game with a suitably large basis \cite{puri1995theory}. 
Parity games are polynomial-time equivalent to the model-checking problem of the propositional $\mu$-calculus~\cite{emerson_jutla_sistla}, which has been called \emph{`the most important logics in model checking'}~\cite{BradfieldIgor}. 
For parity games, a quasipolynomial algorithm has been found recently~\cite{CaludeJKLS22}. 
Subsequent work indicates that the various quasipolynomial methods that have been obtained lately for parity games~\cite{LehtinenPSW22,JurdzinskiMT22,LehtinenB20} 
do not extend to mean payoff games~\cite{ColcombetFGO22}.

Mean payoff games can be generalised to \emph{stochastic mean payoff games} (sometimes called \emph{$2 \frac{1}{2}$-player games}), where the graph might contain stochastic nodes. If the strategy of one of the two players in such a game is fixed, the game turns into a Markov decision process, for which polynomial-time algorithms based on linear programming are known~\cite{puterman}. Stochastic mean payoff games 
are equivalent under polynomial-time Turing reductions to Condon's \emph{simple stochastic games}~\cite{andersson_miltersen} (we may even assume that the simple stochastic games are \emph{stopping}, see the discussion in \cref{sect:SG}), which are known to be in the intersection of NP and coNP~\cite{condon}. There are many other variants of games that all have the same complexity, see~\cite{andersson_miltersen}. 

One approach to analyze mean payoff games consists in reducing these games to \emph{constraint satisfaction problems (CSPs)}. More precisely, it is known that mean payoff games are equivalent to the max-atom problem~\cite{atsma,and-or-scheduling}, while {the more expressive} stochastic mean payoff games can be reduced to max-plus-average constraints, 
which are still in NP $\cap$ coNP~\cite{BodirskyMaminoTropical}. In the conference version of this article, we introduced a fragment of max-plus-average constraints, called \emph{max-average constraints}. 
There is a polynomial-time reduction from stopping simple stochastic games to max-average constraints (\cref{sect:max-avg}). 
Max-average constraints have the advantage that they can be further reduced to the 
feasibility problem for semidefinite programs, as we will show here. 
In fact, with some extra effort we can also show this for the entire class of max-plus-average constraints. 
This implies that all the mentioned computational problems from the second cluster can be reduced to semidefinite programming. 
See \cref{fig:landscape} for an overview of the mentioned computational problems and their relationship.

\begin{figure}
\begin{center}
\includegraphics[scale=.5]{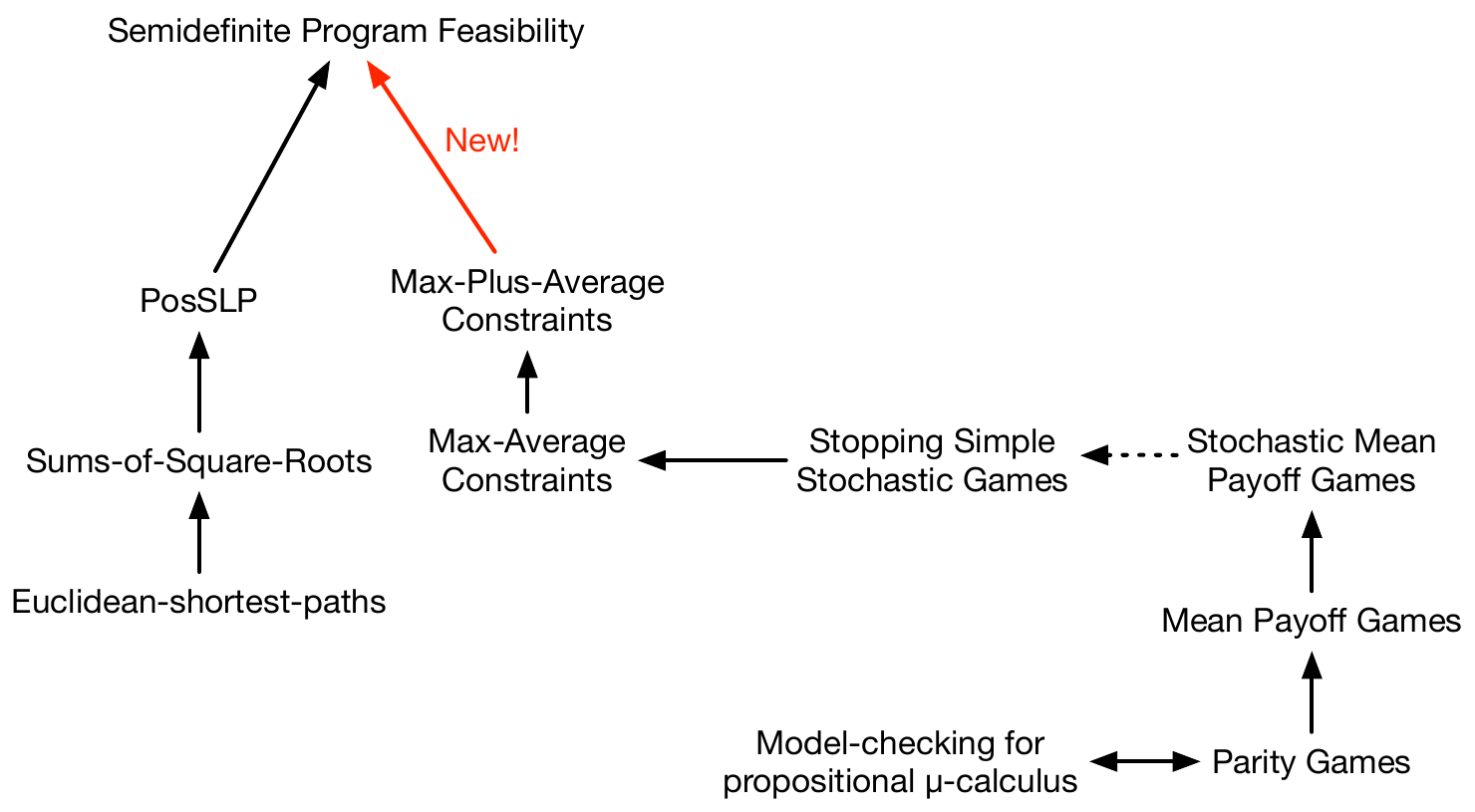}
    \end{center}
    \caption{Computational problems that are not known to be in P and not known to be NP-hard, and their relationship. Arcs indicate polynomial-time (many-one) reductions, the dotted arc indicates a polynomial-time Turing reduction.}
    \label{fig:landscape} 
\end{figure}

Our reduction consists of two steps. The first step uses a similar idea as the reduction of Schewe~\cite{schewe2009parity} 
to reduce parity games and mean payoff games to linear programming, which is, however, not a proper polynomial-time reduction. More precisely, the idea of this first step is to replace constraints of the form $x_0 = \max(x_1,\dots,x_k)$, which occur in the max-atom problem, by
$x_0 = \log_b(b^{x_1} + \dots + b^{x_k})$
for a sufficiently large $b$. This reduction is not a polynomial-time reduction since the numbers $b^{x_i}$ may have exponential bit-sizes for any $x$ that solves the original problem. 
Following \cite{issac2016jsc}, we extend this idea to max-plus-average constraints and obtain a proper polynomial-time reduction to non-Archimedean SDPs, which are SDPs defined over a field of formal power series. 
From there, we use bounds from quantifier elimination results to translate non-Archimedean SDPs to real SDPs. However, we still have the obstacle that these SDPs involve coefficients of doubly exponential size, which means that they have exponential representation size and do not lead to a polynomial-time reduction. In the second step of our reduction, we overcome this difficulty by using the duality of semidefinite programming to find small expressions that define coefficients of doubly exponential size, which combined with the above ideas leads to a polynomial-time reduction from max-plus-average constraints to (real) SDPs. 

Our results imply that if there is a  polynomial-time algorithm for SDPs,
then all the mentioned problems in the second cluster can be solved in polynomial time as well. 
Conversely, our reduction can be used to translate interesting families of instances for (stochastic) mean payoff games or parity games into SDP instances, which might yield interesting families of instances for (algorithmic approaches to) semidefinite programming. 
For linear programming, this idea turned out to be very fruitful: 
certain instances of parity games have been used to prove exponential lower bounds for several famous pivoting rules for the simplex method in linear programming~\cite{Friedmann2011b,friedmann_zadeh,FriedmannHZ14a,AvisF17,DisserFriedmannHopp:2023}.
However, these results were not based on a general polynomial-time reduction from parity games to linear programming (which is not known!). We also hope that algorithmic ideas for solving games might generalise to (certain classes of) SDPs. 

Our results also show that two large research communities (convex real algebraic geometry, and more generally convex optimisation on the one hand, and verification and automata theory on the other hand) were working on closely related computational problems and might profit from joining forces in the future.

\subsection{Related work}
There are multiple works that reduce 
games to continuous optimization problems. As mentioned above, one-player mean payoff games (Markov decision processes) can be reduced to linear programming \cite{puterman}, which implies that they can be solved in polynomial time (but it is open whether there is a strongly polynomial algorithm).
For the case of two-player deterministic mean payoff games, Schewe~\cite{schewe2009parity} proposed a reduction to linear programs with coefficients of exponential representation size. 
Such programs can also be interpreted as linear programs over non-Archimedean ordered fields \cite{tropical_simplex,combinatorial_mean_payoff,AllamigeonBenchimolGaubertJoswig:2021}.
The idea of Schewe was generalized to stochastic games in \cite{BorosElbassioniGurvichMakino:2017}, where the authors prove that stochastic mean payoff games can be encoded by convex programs with constraints of exponential encoding size. 
Such programs can be solved by the ellipsoid method in pseudo-polynomial time provided that the number of stochastic nodes is fixed. 
The non-Archimedean approach was generalized from deterministic games to stochastic games in the work \cite{issac2016jsc}, which proves that stochastic mean payoff games can be reduced to semidefinite programs over non-Archimedean fields. 
The paper \cite{issac2016jsc} serves as a basis for our results. 

These reductions express mean payoff games as convex optimization problems, but the resulting problems are ``non-standard'' in the sense that they either use constraints of exponential encoding size or are expressed over non-Archimedean fields. 
There is yet another line of work deriving polynomial-time reductions from mean payoff games to classical optimization problems which are not convex. 
In particular, \cite{Condon:1993} expressed simple stochastic games as non-convex quadratic programs. 
This reduction is further studied in \cite{Kretinsky2022comparison}. 
Another reduction to (generalized) complementarity problems with $P$-matrices was proposed in \cite{GaertnerRuest:2005,SvenssonVorobyov:2006} and extended in subsequent works \cite{JurdzinskiSavani:2008,FearnyJurdzinskiSavani:2010,HansenIbsenJensen:2013}. 
Such linear complementarity problems form a subclass of quadratic programs and belong to the complexity class UniqueEOPL~\cite{fearnley2020unique}. 
However, they are in general non-convex and can therefore not be expressed by semidefinite programs.

\section{Preliminaries}
The set of natural numbers 
 $\{0,1,2,\dots\}$ is denoted by 
${\mathbb N}$, and the set of real numbers by ${\mathbb R}$. 
We assume familiarity with first-order logic; see, e.g.,~\cite{Hodges}. 
Let $\tau$ be a \emph{signature}, i.e., a set of function and relation symbols. A first-order $\tau$-formula
is a formula built from the Boolean connectives, atomic $\tau$-formulas and quantification in the usual way.
If $A$ is a $\tau$-structure and $\phi$ is a $\tau$-formula with free variables $x_1,\dots,x_k$, then 
$\phi$ \emph{defines} over $A$ the relation
\[
\{(a_1,\dots,a_k) \in A^k \mid A \models \phi(a_1,\dots,a_k)\}.
\]
A set is called \emph{semialgebraic} if it has a first-order definition over the field $({\mathbb R};+,\cdot)$, allowing real parameters (we do not need the order in the signature since it is definable from addition and multiplication). A first-order formula $\phi(x_1,\dots,x_k)$ with free variables $x_1,\dots,x_k$ is called \emph{primitive positive} if it is of the form 
\[
\exists y_1,\dots,y_{\ell} (\psi_1 \wedge \cdots \wedge \psi_m)
\]
where $\psi_1,\dots,\psi_m$ are atomic formulas over the variables $x_1,\dots,x_k,y_1,\dots,y_{\ell}$. 
A relation is called \emph{primitive positive definable in a $\tau$-structure $A$} if there exists a primitive positive $\tau$-formula which defines the relation over $A$.

\subsection{Semidefinite Programming}
For a matrix $S \in {\mathbb R}^{m \times m}$ we write 
\begin{itemize}
    \item $S \succeq 0$ if $S$ is \emph{positive semidefinite}, i.e., $S$ is symmetric and 
$x^{\top} S x \geq 0$ for all $x \in {\mathbb R}^m$,
and 
\item $S \succ 0$ if it is \emph{positive definite}, i.e., $S$ is symmetric and  
$x^{\top} S x > 0$ for all $x \in {\mathbb R}^m \setminus \{0\}$. 
\end{itemize}
A \emph{semidefinite program (SDP)} is an optimisation problem of the following form:
\begin{equation}\label{eq:P}\tag{P}
\begin{aligned}
    \inf \ &\sum_{i=1}^n c_i x_i \nonumber \\ 
     \text{such that} \ &\sum_{i=1}^n x_i A_i - B \succeq 0 \, , 
\end{aligned}
\end{equation}
where $A_1,\dots,A_n,B \in {\mathbb R}^{m \times m}$ are symmetric and $c_1,\dots,c_n \in {\mathbb R}$. 
If the data $A_1,\dots,A_n,B$, $c_1,\dots,c_n$ is rational, then the problem of deciding whether the infimum exists, and if so, to compute the infimum, is a computational problem known as the \emph{semidefinite programming problem}. The \emph{representation size} of the SDP then consists of the sum of the bit lengths of all the rational numbers in the input. 

The expression $\sum_{i=1}^n x_i A_i -B \succeq 0$ in~\cref{eq:P} is called a \emph{linear matrix inequality (LMI)}, and defines the \emph{feasible region} of the SDP, which  
is called a  \emph{spectrahedron}. 
The representation size of LMIs is defined analogously as for SDPs. 
We say that an SDP is \emph{strictly feasible} if $\sum_{i=1}^n x_i A_i - B \succ 0$ has a solution $x \in {\mathbb R}^n$.

\begin{definition}\label{def:sdfp}
    Given symmetric matrices $A_1,\dots,A_n,B \in {\mathbb Q}^{m \times m}$ the \emph{feasibility problem for semidefinite programs} asks to decide whether there is a solution $x \in \R^n$ to an LMI  
    \[
    x_1A_1 + \dots + x_nA_n - B \succeq 0 \enspace .
    \]
\end{definition}

Note that spectrahedra are convex and semialgebraic.
It has been conjectured that every convex semialgebraic set is the projection of a spectrahedron (also called a \emph{spectrahedral shadow}), but this turned out to be false~\cite{scheiderer_helton_nie} (see~\cite{BodKummerThom} for an alternative proof of an even stronger inexpressibility result). 
It is easy to see that the set of spectrahedral shadows is closed under primitive positive definability, and that 
for every primitive positive formula $\phi$ 
we may find an LMI for the 
spectrahedral shadow defined by $\phi$ whose representation size is bounded by the
sum of the representation sizes of conjuncts of $\phi$. We also note that the definition of a semidefinite program makes sense over any real closed field, not only over the real numbers. In particular, it is meaningful to consider semidefinite programs over non-Archimedean real closed fields, such as the field of Puiseux series which we discuss in \cref{sec:nonArchSDP}.

The \emph{(Frobenius) inner product} of two matrices $S,T \in {\mathbb R}^{m \times m}$ is defined as $\langle S,T \rangle := \text{trace}(ST) = \sum_{i,j} S_{ij} T_{ij}$. 
The \emph{dual program} to the SDP from~\cref{eq:P} (to which we refer as the \emph{primal}) is given by
\begin{equation}\label{eq:dual}\tag{D}
\begin{aligned}
    \sup_{Y \succeq 0} \ &\langle B, Y \rangle %
    \\
     \text{such that}  \ &\langle A_i,Y \rangle = c_i \text{ for every } i \in \{1,\dots,n\} 
     \, .
\end{aligned}
\end{equation}

It is well-known that the value of the primal SDP is bounded from below by the value of the dual; however, the lower bound might not be tight, i.e., there might be a non-zero \emph{duality gap}. Moreover, 
both primal and dual might not have optimal solutions, i.e., the supremum may not be a maximum and the infimum might not be a minimum. See \cite{Ramana} for a more complicated, but exact dual. However, for us the following sufficient condition for a vanishing duality gap will be enough. 

\begin{theorem}[see Theorem 2.4.1 in~\cite{ben-tal_nemirovski} and the explanations on page 142 and 143]\label{thm:strong-dual}
    If the primal SDP is strictly feasible and bounded from below, then the dual program attains its supremum and there is no duality gap.
\end{theorem}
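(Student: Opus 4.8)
The plan is to establish strong duality by a separating-hyperplane argument in the space $\mathcal{S}^m$ of symmetric $m\times m$ matrices, with the strict-feasibility (Slater) hypothesis entering at exactly one decisive point. First I would record \emph{weak duality}: for any primal-feasible $x$ and dual-feasible $Y$ one has $\sum_i c_i x_i - \langle B,Y\rangle = \langle \sum_i x_i A_i - B,\,Y\rangle \ge 0$, since the Frobenius inner product of two positive semidefinite matrices is nonnegative; hence the dual value never exceeds the primal value. Let $p^*$ denote the primal optimum, which is finite by hypothesis (feasible and bounded from below). I would then introduce the convex set
\[
\mathcal{C} := \Big\{ \textstyle\sum_i x_i A_i - B - S \ :\ x\in\R^n,\ \textstyle\sum_i c_i x_i < p^*,\ S\succeq 0 \Big\}\subseteq \mathcal{S}^m,
\]
which is convex as the affine image of a convex set, and observe that $0\notin\mathcal{C}$: a point $0\in\mathcal{C}$ would supply a primal-feasible $x$ (with $\sum_i x_i A_i - B = S\succeq 0$) of objective value strictly below $p^*$, contradicting optimality of $p^*$.

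Since $\mathcal{C}$ is convex and avoids the origin, finite-dimensional separation yields a nonzero $Y\in\mathcal{S}^m$ with $\langle Y,Z\rangle \le 0$ for all $Z\in\mathcal{C}$. Two consequences follow by probing $\mathcal{C}$ in different directions. Sending $S$ to infinity along any ray $\lambda S_0$ with $S_0\succeq 0$ shows $\langle Y,S_0\rangle\ge 0$ for all $S_0\succeq 0$, so $Y\succeq 0$ by self-duality of the positive semidefinite cone. Setting $S=0$ gives $\sum_i \langle A_i,Y\rangle x_i \le \langle B,Y\rangle$ for every $x$ in the open half-space $\{\sum_i c_i x_i < p^*\}$; a linear functional bounded above on an open half-space must be a nonnegative multiple of the functional defining it, so $\langle A_i,Y\rangle = \mu\, c_i$ for all $i$, for some scalar $\mu\ge 0$.

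The crux — and the only place strict feasibility is used — is to show $\mu>0$. If $\mu=0$, then $\langle A_i,Y\rangle=0$ for all $i$, and evaluating against a strictly feasible $\bar x$ with $\bar S:=\sum_i \bar x_i A_i - B\succ 0$ gives $\langle Y,\bar S\rangle = -\langle B,Y\rangle$; but $\langle Y,\bar S\rangle>0$ because $Y\succeq 0$ is nonzero and $\bar S$ is positive definite, whereas the half-space inequality with $S=0$ forces $\langle B,Y\rangle\ge 0$, a contradiction. Hence $\mu>0$, and $\tilde Y := Y/\mu$ is dual-feasible. Dividing the half-space inequality by $\mu$ and taking the supremum over $\{\sum_i c_i x_i < p^*\}$ yields $p^*\le \langle B,\tilde Y\rangle$; combined with weak duality this forces $\langle B,\tilde Y\rangle = p^*$, so the dual attains its supremum with no duality gap.

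I expect the main obstacle to be exactly the step $\mu>0$: without the Slater condition the separating hyperplane can be ``vertical'' ($\mu=0$), producing no dual-feasible certificate and permitting a genuine duality gap, so the argument must exploit positive definiteness of $\bar S$ precisely here. A minor point to dispatch separately is the degenerate case $c=0$ (where the half-space $\{\sum_i c_i x_i<p^*\}$ is empty and $\mathcal{C}$ collapses): there $p^*=0$ and $Y=0$ is already dual-optimal, so the conclusion holds trivially.
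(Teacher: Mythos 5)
Your proof is correct; note that the paper itself gives no proof of this statement, citing it as Theorem~2.4.1 of Ben-Tal and Nemirovski, and your separating-hyperplane argument is essentially the standard proof from that source (the Conic Duality Theorem specialized to the semidefinite cone). In particular, you invoke Slater's condition at exactly the right place --- ruling out a ``vertical'' separating hyperplane via $\langle Y,\bar S\rangle>0$ for $Y\succeq 0$, $Y\neq 0$, $\bar S\succ 0$ --- and you correctly dispatch the degenerate case $c=0$, where the set $\mathcal{C}$ is empty and $Y=0$ is dual-optimal.
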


\subsection{Stochastic Games}
\label{sect:SG}
A \emph{stochastic mean payoff game} is a two-player game played on a finite edge-weighted directed graph $\dgraph = (V,E,\payoff,p)$,
i.e., $V$ is a finite set of nodes, $E \subseteq V^2$ is a set of edges, $r \colon E \to {\mathbb Q}$ is a \emph{weight function},
and $p \colon E \to {\mathbb Q} \cap [0,1]$
specifies \emph{edge probabilities}. 
The nodes of the graph are partitioned into three subsets, $V = \Minvertices \sqcup \Maxvertices \sqcup \Randvertices$. The nodes in $\Minvertices$ are controlled by player Min, the nodes in $\Maxvertices$ are controlled by player Max, and the nodes in $\Randvertices$ are controlled by nature; the probabilities of the outgoing edges of each vertex in $\Randvertices$ sum up to one. 
The players play the game by moving a pawn on the graph. When the pawn is on a node $v \in \Minvertices$, player Min chooses an outgoing edge $(v,u)$ and moves the pawn to $u$. Analogously, player Max chooses the next move when the pawn is on a node $v \in \Maxvertices$. When the pawn is on a node $v \in \Randvertices$, the next move is chosen by nature randomly according to the probability distribution given by $p$. 
To ensure that players can always make a move, we suppose that all nodes of the graph have at least one outgoing edge. The weights of the edges of the graph represent payoffs that player Max receives from player Min after each move of the pawn. 
In order to state the objectives of both players, we first specify the strategies that they are allowed to use. A \emph{pure and positional} strategy (also called a \emph{policy}) for player Min is a function $\sigma \colon \Minvertices \to V$ that to each node controlled by Min associates a neighbor, $(v,\sigma(v)) \in E$. Player Min plays according to $\sigma$ if they always move the pawn to $\sigma(v)$ when the pawn lands on $v$. We analogously define a policy $\tau \colon \Maxvertices \to V$ of player Max. Note that if we fix a pair of policies and an initial position of the pawn, then the resulting movement of the pawn is a Markov chain on the graph $(V,E)$. Given an initial position $u \in V$ of the pawn and a pair of policies $(\sigma,\tau)$, the \emph{mean payoff reward} of player Max is the quantity
\[
g(\sigma,\tau)_u \coloneqq \lim_{N \to \infty} \frac{1}{N}\mathbb{E}^u_{\sigma,\tau}(\payoff_{u_0u_1} + \dots + \payoff_{u_{N-1}u_{N}}) \, ,
\]
where $u = u_0, u_1, \dots, u_N$ is the path of the pawn in the first $N$ turns of the game and the expectation is taken with respect to the Markov chain obtained by fixing $(\sigma,\tau)$. The ergodic theorem of Markov chains shows that $g(\sigma,\tau)_u$ is well-defined. The objective of player Max is to maximize the mean payoff reward while the objective of player Min is to minimize this reward. It is known that both players in a stochastic mean payoff game have optimal policies \cite{liggett_lippman}. In other words, there exist policies $(\sigma^*,\tau^*)$ and a vector $\chi \in \R^{\vertices}$ such that for every $u$ and every $(\sigma,\tau)$ we have the inequality
\[
g(\sigma^*,\tau)_u \le \chi_u \le g(\sigma,\tau^*)_u \, .
\]
The reward $\chi = g(\sigma^*,\tau^*)$ is called the \emph{value} of the game. In order to make these games computationally tractable, we suppose that the weights and the probability distributions at nodes in $\Randvertices$ are rational. The nodes of the graph are also called the \emph{states} of the game.

A \emph{simple stochastic game} is a special case of a stochastic mean payoff game. In this game, every node in $\Randvertices$ has out-degree exactly two and the probability distribution at every such node is $(1/2,1/2)$. Furthermore, the graph is equipped with two special terminal states called $\Win$ and $\Lose$. These states are absorbing, i.e., $\Win$ has only one outgoing edge that goes back to $\Win$ and $\Lose$ has only one outgoing edge that goes back to $\Lose$. The weight of the edge going from $\Win$ to $\Win$ is equal to $1$, while the weight of all of the other edges of the graph is equal to $0$. In this case, the mean payoff reward of player Max is simply the probability that the pawn reaches the winning state, $g(\sigma,\tau)_u = \mathbb{P}^{u}_{\sigma,\tau}(\text{Reach}(\Win))$. 

A simple stochastic game is called \emph{stopping} if for every initial position $u \in V$ and every pair of policies $(\sigma,\tau)$, the pawn reaches a terminal state with probability one, $\mathbb{P}^{u}_{\sigma,\tau}(\text{Reach}(\Win,\Lose)) = 1$. In other words, the game is stopping if the Markov chain obtained by fixing any pair of policies has only two recurrent classes, namely $\{\Win\}$ and $\{\Lose\}$.

The computational problem associated with stochastic mean payoff games is the following: given a stochastic mean payoff game, find its value and a pair of optimal policies. In \cite{andersson_miltersen}, Andersson and Miltersen proved that this problem is polynomial-time (Turing) reducible to the problem of finding the value of a stopping simple stochastic game.
By a binary search argument, the problem of computing the value of a stopping simple stochastic game can be further reduced to the decision problem, in which we want to decide whether the value of a given state of such a game is at least~$1/2$. 
The details of this reduction can be found, e.g., in \cite{issac2016jsc} or \cite[Section~7.1]{skomra_phd}.

\begin{theorem}[{\cite[Lemma~48]{issac2016jsc}}] \label{thm:smpg-to-stopping}
The problem of computing the value and a pair of optimal policies of a stochastic mean payoff game is polynomial-time (Turing) reducible to the problem of deciding whether a given state of a stopping simple stochastic game has value at least $1/2$.
\end{theorem}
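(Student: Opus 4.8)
The plan is to exhibit the reduction as the composition of the two polynomial-time Turing reductions already indicated, and to supply the details of the second of these, which carries the actual content. For the first step I would invoke Andersson and Miltersen~\cite{andersson_miltersen}: computing the value and a pair of optimal policies of a stochastic mean payoff game is polynomial-time Turing reducible to computing the value of a stopping simple stochastic game. Taking this as given, it remains to reduce the latter value-computation problem to the decision problem of whether a designated state has value at least $1/2$; since a composition of polynomial-time Turing reductions is again one, this suffices.

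For the value-computation step I would argue by binary search, and the first thing to pin down is the bit-complexity of the value. Fixing positional policies $(\sigma,\tau)$ in a stopping simple stochastic game with $n$ states turns it into an absorbing Markov chain; the vector of winning probabilities $v$ then solves $(I-Q)v=b$, where $Q$ is the substochastic transition matrix on the transient states with entries in $\{0,\tfrac12,1\}$ and $b$ records the one-step probabilities of moving to $\Win$. Clearing denominators, $2(I-Q)$ is an integer matrix of size at most $n$ with small entries, so by Cramer's rule every coordinate of $v$ is rational with denominator dividing $\det(2(I-Q))$, whose absolute value is at most $2^{O(n\log n)}$ by Hadamard's inequality. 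Consequently the value $\val(u)$ --- being the value for some optimal policy pair --- is a rational of bit-length $\mathrm{poly}(n)$, and two distinct candidate values differ by at least $2^{-\mathrm{poly}(n)}$. Hence it suffices to locate $\val(u)$ within an interval shorter than this gap using $\mathrm{poly}(n)$ comparisons against dyadic thresholds, and then to round to the unique rational of bounded denominator in that interval (via continued fractions).

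It then remains to realise a single comparison ``$\val(u)\ge\theta$'' for a dyadic $\theta=k/2^m$ as a query to the fixed $1/2$-threshold oracle. I would do this with a gadget that adds fresh random nodes, each of out-degree two with distribution $(\tfrac12,\tfrac12)$, arranged as a binary tree whose leaves point to $u$, to $\Win$, and to $\Lose$. Choosing the tree so that the new start state $w$ satisfies $\val(w)=a\,\val(u)+b$ with dyadic $a=2^{-\ell}>0$ and $b=\tfrac12-a\theta$ (a convex combination, as one checks that the weights on $u$, $\Win$, $\Lose$ are nonnegative for $\ell$ large enough), one gets $\val(w)\ge\tfrac12$ if and only if $\val(u)\ge\theta$. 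The gadget only introduces random and terminal transitions, so the modified game is still a stopping simple stochastic game of size $\mathrm{poly}(n)$, and feeding $w$ to the decision oracle answers the comparison.

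The main obstacle I anticipate is not the high-level composition but the simultaneous control of three constraints in the binary-search step: the determinant bound must guarantee only polynomially many bits of precision so that the search terminates and the final rounding is unambiguous, and the threshold gadget must realise an arbitrary dyadic shift while using only the fixed $(\tfrac12,\tfrac12)$ randomisation and out-degree two, all the while preserving the stopping property. Verifying that the affine transform can be kept a genuine convex combination (nonnegativity of all branch weights) and that the enlarged chain still absorbs into $\{\Win\}$ or $\{\Lose\}$ with probability one is the fiddly technical core; the extraction of the pair of optimal policies is subsumed by the assumed reduction of~\cite{andersson_miltersen}.
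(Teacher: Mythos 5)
Your proposal is correct and takes essentially the same route as the paper, which proves this theorem exactly by composing the Andersson--Miltersen reduction to stopping simple stochastic games with a binary-search argument reducing value computation to the ``value at least $1/2$'' decision problem, deferring the details you supply (the Condon-style denominator bound via Cramer's rule and Hadamard's inequality, and the dyadic-threshold gadget built from $(\tfrac12,\tfrac12)$ random nodes) to \cite{issac2016jsc} and \cite[Section~7.1]{skomra_phd}. One small point to make explicit: the gadget realizing the dyadic weight $b$ should be a binary-expansion chain with one random node per bit rather than a complete binary tree, since the latter would have $2^m$ leaves; your $\mathrm{poly}(n)$ size claim implicitly assumes this standard construction.
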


\section{From stochastic games to 
max-average constraints}
\label{sect:max-avg}
In this section, we focus on the decision problem stated in \cref{thm:smpg-to-stopping} above: given a stopping simple stochastic game we want to decide whether the value of a given state is at least $1/2$. 
This problem can be reduced to max-average constraint satisfaction problems (\emph{max-average CSPs}). 
The ideas for this reduction are not new, but we are not aware of an explicit reference for this fact, and we therefore present a short proof in this section. 
In the next section, we will show that max-average CSPs, and even the larger class of max-plus-average CSPs, then further reduce to the 
feasibility problem of non-Archimedean semidefinite programs. 
The fact that stochastic games can be encoded as non-Archimedean SDPs was proven in \cite{issac2016jsc} and our reduction is very similar.  
While their approach relies on reducing stochastic games to ergodic games, we omit this step and obtain a slightly more straightforward reduction. 

\begin{definition}\label{def:mma}
    An instance of a \emph{max-plus-average CSP} is a conjunction of constraints, each of which has one of the following forms: 
    \begin{enumerate}
        \item 
    $x_0 \leq \max(x_1,\dots,x_k)$ where $k \in \N$, 
    \item $x_0 \leq c+x_1$ where $c$ is a rational constant, 
        \item $x_0 \leq \frac{x_1+x_2}{2}$,
        \item $x_0 = c$ where $c$ is a rational constant. 
    \end{enumerate}
    The computational task is to determine whether such a conjunction has a solution over $\Q \cup \{-\infty\}$ (equivalently, over $\R \cup \{-\infty\} $).
    If the instance does not contain constraints of the form $2.$, we call it a max-average instance.
\end{definition}

Note that max-average CSP instances may also use constraints of the form $x_0 \leq \min(x_1,\dots,x_k)$ where $k \in \N$, $x_0 \leq c$, or $x_0 \geq c$ for some rational constant $c$, because the respective relations are primitively positively definable using max-average constraints.
 
 We now show that the problem of solving stochastic mean payoff games can be reduced to max-average CSPs.

 A reduction to max-plus-average CSPs follows from \cite[Theorem~18]{issac2016jsc} and is based on a Collatz--Wielandt property of order-preserving and additively homogeneous maps. 
 We provide a more direct reduction to the smaller class of CSPs without constraints of the form $x_0 \le c + x_1$ as an intermediate step for our reduction. 
 To do so, we give a proposition that characterizes the value of a stopping simple stochastic game.

 It was already proven in \cite{condon} that this value is a unique fixed point of an operator involving min, max, and averaging operations. By replacing the fixed point equations with inequalities, we obtain a set of certificates which prove that the value is at least $1/2$. This observation is proven, e.g., in \cite{stochasticchapter} and gives inequalities very similar to the inequalities on Shapley operators used in \cite{issac2016jsc}.
 
\begin{proposition}\label{value_ineq}
Consider a stopping simple stochastic game played on a directed graph $\dgraph=([n],E)$. Then, the value of a state $k \in [n]$ is at least $1/2$ if and only if the following system of inequalities (an instance of a max-average constraint satisfaction problem) has a solution $x 
\in \Q_{\geq 0}^n$: 
\begin{equation}\label{eq:gameval}
\begin{aligned}
 x_i &\le \max_{(i,j) \in \edges}\{x_j\} && \text{for every } i \in \Maxvertices, \\
 x_i &\le \min_{(i,j) \in \edges}\{x_j\} && \text{for every }  i \in \Minvertices,  \\
x_i &\le \frac{1}{2}\sum_{(i,j) \in \edges}x_{j}  && \text{for every } i \in \Randvertices,\\
x_{\Lose} &= 0,  \\
x_{\Win} &= 1,\\
x_k &\ge 1/2.
\end{aligned}
\end{equation}
\end{proposition}
\begin{proof}
Let $(Q)$ denote the system of inequalities obtained from \cref{eq:gameval} by removing the inequality $x_k \ge 1/2$. By \cite{condon} or \cite[Theorem~77]{stochasticchapter}, the value of the game satisfies $(Q)$ with inequalities replaced by equalities. Hence, if the value at state $k$ is at least $1/2$, then the value of the game satisfies \cref{eq:gameval}. Moreover, the value of the game is a rational vector by \cite[Lemma~2]{condon} or \cite[Corollary~18]{stochasticchapter}. Conversely, if a point $x \in \Q_{\geq 0}^n$ satisfies $(Q)$, then \cite[Lemma~83]{stochasticchapter} implies that for all $i \in [n]$, the entry $x_i$ is not greater than the value of the game at state $i$. Hence, if we further suppose that $x_k \ge 1/2$, then the value of the game at state $k$ is at least $1/2$.
\end{proof}

\section{From max-plus-average constraints to non-Archimedean SDPs}\label{sec:nonArchSDP}
By ``lifting'' the inequalities 
in Definition~\ref{def:mma}
to Puiseux series, we get the reduction from max-plus-average constraint satisfaction problems to non-Archimedean SDPs. 
A \emph{(formal real) Puiseux series} is a series of the form
\[
\bm{x} = \bm{x}(t) = c_0t^{a/n} + c_1t^{(a-1)/n} + c_2t^{(a-2)/n} + \dots \, ,
\]
where $t$ is a formal parameter, the number $n$ is a positive integer, $a$ is an integer, and the coefficients $c_i$ are real. We further assume that $c_0 \neq 0$ for every Puiseux series except the zero series. The number $a/n$ is called the \emph{valuation} of the series $\bm{x}$ and is denoted by $\val(\bm{x})$, with the convention that $\val(0) = -\infty$.  Puiseux series can be added and multiplied in the natural way. Moreover, they can be ordered by defining $\bm{x} \ge 0$ if $c_0 \ge 0$ and $\bm{x} \ge \bm{y}$ if $\bm{x} - \bm{y} \ge 0$. It is known that Puiseux series form a real closed field, see, e.g., \cite[Section~2.6]{basu_pollack_roy_algorithms} for a detailed discussion.\footnote{In this work, we use a convention in which one interprets $t$ as a ``very large'' parameter. Many authors use the opposite but equivalent convention in which $t$ is a very small parameter. This explains the differences in notation between our work and \cite{basu_pollack_roy_algorithms}.} This field is non-Archimedean, meaning that the order does not satisfy the Archimedean property: the series $\bm{x}(t) = t$ is larger than any natural number, i.e., $t \ge m$ for all $m \in \N$. We denote the field of Puiseux series by $\puiseux$. For any $\bm{x}, \bm{y} \in \puiseux$, the following properties of the valuation function are immediate from the definitions:
\begin{equation}\label{eq:valuation}
\begin{aligned}
\val(\bm{x} + \bm{y}) &\le \max\{\val(\bm{x}), \val(\bm{y})\} \, ,\\
\val(\bm{x}\bm{y}) &= \val(\bm{x}) + \val(\bm{y}) \, ,\\
\bm{x} \ge \bm{y} \ge 0 &\implies \val(\bm{x}) \ge \val(\bm{y}) \, .
\end{aligned}
\end{equation}

\begin{theorem}\label{nonarchSDP}
An instance $\phi$ with variables $x_1,\dots,x_n$ of a max-plus-average constraint satisfaction problem has a solution in $(\Q \cup\{-\infty\})^n$
if and only if the following system of inequalities $S_\phi(t)$, which is a (quadratic) semidefinite program over Puiseux series, has a solution in $\puiseux^n$: 
for every variable $x_i$ of $\phi$ we have a variable $\bm{x}_i$, and the inequalities are 
\begin{equation}\label{eq:puiseuxval}
\begin{aligned}
\bm{x}_{i_0} &\le \bm{x}_{i_1} + \cdots + \bm{x}_{i_k} && \text{ for every constraint $x_{i_0} \leq \max(x_{i_1},\dots,x_{i_k})$ in $\phi$,} \\
\bm{x}_{i_0} &\le t^c \bm{x}_{i_1} &&
\text{ for every constraint $x_{i_0} \leq c+x_{i_1}$,} \\
\bm{x}^2_{i_0} &\le \bm{x}_{i_1}\bm{x}_{i_2} && \text{ for every constraint $x_{i_0} \le \frac{x_{i_1} + x_{i_2}}{2}$  in $\phi$,} \\
\bm{x}_{i_0} & = t^c && \text{ for every constraint $x_{i_0} = c$  in $\phi$,} \\
\bm{x}_{i_0} & \geq 0 && \text{ for every $i_0 \in \{1,\dots,n\}$.}
\end{aligned}
\end{equation}
\end{theorem}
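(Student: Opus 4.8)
The plan is to establish the equivalence by using the valuation map as a dictionary between solutions of the max-average CSP $\phi$ over $\Q \cup \{-\infty\}$ and solutions of the lifted system $S_\phi(t)$ over $\puiseux$. The key observation is that the three properties collected in \cref{eq:valuation} turn the Puiseux inequalities into the original constraints when one applies $\val$, and conversely that one can build a Puiseux solution from a real solution by the substitution $\bm{x}_i = t^{x_i}$ (with the convention $t^{-\infty} = 0$).

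\medskip

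\textbf{From Puiseux to $\Q \cup \{-\infty\}$.}
First I would assume that $S_\phi(t)$ has a solution $(\bm{x}_1,\dots,\bm{x}_n) \in \puiseux^n$ and define $x_i \coloneqq \val(\bm{x}_i) \in \Q \cup \{-\infty\}$. I then check that $(x_1,\dots,x_n)$ satisfies $\phi$ constraint by constraint. For a max-constraint, the inequality $\bm{x}_{i_0} \le \bm{x}_{i_1} + \cdots + \bm{x}_{i_k}$ together with $\bm{x}_{i_j} \ge 0$ gives, via the first and third lines of \cref{eq:valuation}, that $\val(\bm{x}_{i_0}) \le \val(\bm{x}_{i_1} + \cdots + \bm{x}_{i_k}) \le \max_j \val(\bm{x}_{i_j})$, which is exactly $x_{i_0} \le \max(x_{i_1},\dots,x_{i_k})$. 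For an average-constraint, applying $\val$ to $\bm{x}_{i_0}^2 \le \bm{x}_{i_1}\bm{x}_{i_2}$ and using the product rule $\val(\bm{x}\bm{y}) = \val(\bm{x}) + \val(\bm{y})$ yields $2 x_{i_0} \le x_{i_1} + x_{i_2}$, i.e.\ $x_{i_0} \le \frac{x_{i_1}+x_{i_2}}{2}$. For a constant constraint $\bm{x}_{i_0} = t^c$ we have $\val(t^c) = c$, so $x_{i_0} = c$. Thus $(x_1,\dots,x_n)$ solves $\phi$.

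\medskip

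\textbf{From $\Q \cup \{-\infty\}$ to Puiseux.}
Conversely, given a solution $(x_1,\dots,x_n) \in (\Q \cup \{-\infty\})^n$ of $\phi$, I set $\bm{x}_i \coloneqq t^{x_i}$, interpreting $t^{-\infty}$ as the zero series; each $\bm{x}_i$ is then a nonnegative Puiseux series, so the last line of \cref{eq:puiseuxval} holds, and constant constraints hold by construction. The point to verify is that the inequalities are \emph{preserved} rather than merely reflected. For a max-constraint with $x_{i_0} \le \max_j x_{i_j} = x_{i_{j^*}}$, the dominant term of the right-hand side $\bm{x}_{i_1} + \cdots + \bm{x}_{i_k}$ has valuation $\max_j x_{i_j} = x_{i_{j^*}}$ with a \emph{positive} leading coefficient (the coefficients are all $1$, so no cancellation occurs in the top-order term), so the right-hand side is $\ge t^{x_{i_{j^*}}} \ge t^{x_{i_0}} = \bm{x}_{i_0}$ for large $t$; this uses that monomials $t^a$ are ordered by their exponent in $\puiseux$. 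For an average-constraint, $\bm{x}_{i_0}^2 = t^{2x_{i_0}}$ and $\bm{x}_{i_1}\bm{x}_{i_2} = t^{x_{i_1}+x_{i_2}}$, and $2x_{i_0} \le x_{i_1}+x_{i_2}$ gives $t^{2x_{i_0}} \le t^{x_{i_1}+x_{i_2}}$ directly.

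\medskip

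\textbf{Main obstacle.}
The routine direction is Puiseux-to-real, where the valuation inequalities do all the work. The subtler direction is real-to-Puiseux, and the one step that genuinely needs care is the max-constraint: passing from $\val$ of a sum back to an inequality on the sum requires ruling out cancellation of leading terms, which is why it matters that the max-constraint lifts to a \emph{sum with unit coefficients} (a tropical sum) rather than to something signed. Because all summands $\bm{x}_{i_j} = t^{x_{i_j}}$ are nonnegative with positive leading coefficients, the leading coefficient of the sum is a sum of nonnegative reals including at least one strictly positive term, so it cannot vanish and the valuation of the sum equals the maximum of the valuations; this is precisely the point where the asymmetry between the $\le$-max encoding and a hypothetical equality-max encoding would matter. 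I would make this cancellation-avoidance explicit, as it is the crux that makes the ``one-sided'' lifting in \cref{eq:puiseuxval} correct.
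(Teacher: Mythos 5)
Your equivalence argument is correct and follows essentially the same route as the paper: apply $\val$ and the three properties in \cref{eq:valuation} for the Puiseux-to-rational direction, and lift a rational solution via the monomial substitution $\bm{x}_i = t^{x_i}$ (with $t^{-\infty} = 0$) for the converse; the paper's own proof is just a terser version of this, and your explicit treatment of the cancellation issue is fine though stronger than needed --- since every summand $t^{x_{i_j}}$ is nonnegative, the sum dominates each individual summand outright, so $\bm{x}_{i_1} + \cdots + \bm{x}_{i_k} \ge t^{x_{i_{j^*}}} \ge t^{x_{i_0}}$ without any analysis of leading coefficients.

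However, you omit one part of the statement: the theorem also asserts that the system $S_\phi(t)$ \emph{is} a (quadratic) semidefinite program over Puiseux series, and this claim requires justification, which the paper supplies. Concretely, each quadratic constraint $\bm{x}_{i_0}^2 \le \bm{x}_{i_1}\bm{x}_{i_2}$ together with the nonnegativity of the variables is equivalent to
\[
\begin{pmatrix} \bm{x}_{i_1} & \bm{x}_{i_0} \\ \bm{x}_{i_0} & \bm{x}_{i_2} \end{pmatrix} \succeq 0 \, ,
\]
the linear (in)equalities and the constraints $\bm{x}_{i_0} = t^c$ are expressed by diagonal matrices, and the conjunction of all constraints is a single LMI with block-diagonal matrices. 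Without this step you have shown equivalence of $\phi$ with a system of polynomial inequalities over $\puiseux$, but not that this system is in the form required for the reduction to (non-Archimedean) SDP feasibility, which is the point of the theorem in the overall chain of reductions.
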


\begin{proof}
Suppose that $\bm{x} = (\bm{x}_1,\dots,\bm{x}_n) \in \puiseux^n$ satisfies \cref{eq:puiseuxval}. Then, $\val(\bm{x})$ belongs to $(\Q \cup \{-\infty\})^n$ and satisfies  $\phi$
by the properties of the valuation function \cref{eq:valuation}. 
Conversely, if $(x_1,\dots,x_n) \in (\Q \cup \{-\infty\})^n$ satisfies $\phi$, then $(t^{x_1},\dots,t^{x_n})$ (with the convention that $t^{-\infty} = 0$) satisfies \cref{eq:puiseuxval}. 

This amounts to the feasibility problem for a semidefinite program: 
the linear (in)equalities can be expressed by diagonal matrices and each  quadratic condition in combination with the non-negativity of all variables is just the positive semidefiniteness of a symmetric 2-by-2 matrix. 
Indeed, the symmetric matrix $\begin{pmatrix} x_1 & x_3 \\ x_3 & x_2 \end{pmatrix}$ is positive semidefinite if and only if $x_1 \ge 0$, $x_2 \ge 0$, and $x_1x_2 \ge x_3^2$. (More generally, a symmetric matrix is positive semidefinite if and only if all of its principal minors are nonnegative.) These are precisely the inequalities that appear in \cref{eq:puiseuxval}, together with some linear inequalities. A conjunction of such inequalities can then be expressed by a single LMI given by block-diagonal matrices 
(a symmetric block-diagonal matrix is positive semidefinite if and only if every block is positive semidefinite).
\end{proof}

\section{From non-Archimedean to real SDPs}
To conclude, we aim to reduce the non-Archimedean semidefinite program from \cref{eq:puiseuxval} to a real semidefinite program. 
We will do this in two steps. First, we replace the formal parameter $t$ by a real number that is sufficiently large (doubly exponential in the size of the instance). In fact, any non-Archimedean semialgebraic feasiblity problem can be reduced to a semialgebraic feasibility problem over the reals by replacing $t$ with a sufficiently large real number. This follows 
from the quantifier elimination in real closed fields coupled with the definition of the order in Puiseux series. By using the effective bounds on quantifier elimination discussed, e.g., in \cite{basu_pollack_roy_algorithms}, we can give a bound on this ``sufficiently large'' number. Second, we will show that this SDP with very large coefficients can be reduced to another SDP that has coefficients of polynomial bit-size. 

Before presenting the proofs, we note that a reduction from non-Archimedean SDPs coming from games to real SDPs with large coefficients was already studied in \cite[Theorem~36]{issac2016jsc}. However, the bounds given in \cite{issac2016jsc} rely on an additional assumption on the underlying game. In particular, they cannot be applied directly to the SDP given in \cref{eq:puiseuxval}. For max-average systems coming from games, one could overcome this restriction by perturbing the underlying game slightly. Nevertheless, the bounds coming from quantifier elimination which we present below do not require any additional assumptions and can be of independent interest. We recall that Tarski's quantifier elimination allows to eliminate quantifiers of first-order $\tau$-formulas, with signature $\tau$ given by $\tau = \{+,\cdot,0,1,\leq\}$.

\begin{theorem}\label{real_parameter}
Let $P_1, \dots, P_m \in \Z[x_0,x_1, \dots, x_n]$ be a collection of polynomials with integer coefficients and let $\diamond \in \{\le,\ge,<,>,=, \neq\}^m$ be a sign pattern. Consider the semialgebraic set over Puiseux series defined by 
\[
\bm{S} \coloneqq \{\bm{x} \in \puiseux^n \colon \forall i \in [m], \, P_i(t,\bm{x}) \diamond_i 0\} \, .
\]
Furthermore, for any real $L > 0$ consider the semialgebraic set over the real numbers defined by 
\[
S(L) \coloneqq \{x \in \R^n \colon \forall i \in [m], \, P_i(L,x) \diamond_i 0\} \, .
\]
Then, $\bm{S}$ is nonempty if and only if $S(L)$ is nonempty for all sufficiently large $L$. More precisely, there exists an absolute constant $C > 0$ with the following property. If 
the maximal degree of any polynomial $P_i$ is bounded by $d \ge 2$ 
and $s$ denotes the maximal bit-size of any coefficient of $P_i$, then either for 
all $L > 2^{sd^{Cn}}$ 
we have $S(L) = \emptyset$,
or for all $L > 2^{sd^{Cn}}$ we have $S(L) \neq \emptyset$. In the former case $\bm{S}$ is empty and in the latter case $\bm{S}$ is nonempty.
\end{theorem}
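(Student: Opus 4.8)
The statement has the shape of a transfer principle between the Puiseux field $\puiseux$ and the reals, made quantitative via quantifier elimination. My plan is to encode the nonemptiness of $\bm{S}$ as a first-order sentence over the parameter, eliminate the $\bm{x}$-variables, and then analyse the resulting univariate condition in the single variable $t$ (resp.\ $L$).

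First I would form the prenex sentence
\[
\Phi(t) \;\coloneqq\; \exists x_1 \cdots \exists x_n \; \bigwedge_{i=1}^m \bigl(P_i(t,x_1,\dots,x_n) \diamond_i 0\bigr),
\]
a first-order formula in the language of ordered fields with the single free variable $t$. Applying effective quantifier elimination in real closed fields (as in \cite{basu_pollack_roy_algorithms}) yields an equivalent quantifier-free formula $\Psi(t)$ that is a Boolean combination of sign conditions on univariate polynomials $Q_1(t),\dots,Q_r(t) \in \Z[t]$. The key point is that since $\puiseux$ and $\R$ are both real closed, $\Phi$ and $\Psi$ define the same subset of \emph{any} real closed field; in particular $\bm{S} \neq \emptyset$ iff $\Psi$ holds at the element $t \in \puiseux$, while $S(L) \neq \emptyset$ iff $\Psi(L)$ holds in $\R$. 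Thus it suffices to compare the truth of the \emph{univariate} condition $\Psi$ at the transcendental element $t \in \puiseux$ against $\Psi(L)$ for large real $L$.

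Next I would exploit the structure of $t$ as a Puiseux series: $t$ has valuation $1$ and is larger than every real constant, so for each univariate $Q_j$, the sign of $Q_j(t)$ in $\puiseux$ equals the sign of its leading coefficient, which is exactly $\sign Q_j(L)$ for all sufficiently large real $L$ (larger than the largest real root of every $Q_j$). Consequently $\Psi(t)$ holds in $\puiseux$ if and only if $\Psi(L)$ holds in $\R$ for all $L$ past the largest such root; and since $\Psi(L)$ is eventually constant in $L$, either all large $S(L)$ are empty or all are nonempty, matching $\bm{S}$ in either case. This establishes the qualitative equivalence.

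The main obstacle, and the heart of the theorem, is the \emph{quantitative} threshold $L > 2^{sd^{Cn}}$. To obtain it I would track the complexity bounds through the quantifier-elimination algorithm: eliminating $n$ variables from $m$ polynomials of degree $\le d$ in $n{+}1$ variables produces output polynomials $Q_j$ whose degrees and coefficient bit-sizes are controlled by the standard doubly-exponential-in-$n$ bounds for QE (degrees roughly $d^{O(n)}$, coefficients of bit-size $s \cdot d^{O(n)}$). A Cauchy-type root bound then shows every real root of every $Q_j$ is at most $2^{s d^{Cn}}$ for an absolute constant $C$, so any $L$ exceeding this threshold lies past all real roots and hence in the ``eventually stable'' regime. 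I expect the delicate part to be quoting the QE bounds from \cite{basu_pollack_roy_algorithms} in exactly the right normalisation (degree versus number of variables, and the interplay of the $d \ge 2$ hypothesis), and confirming that the constant $C$ can indeed be taken absolute rather than depending on $m$; the sign-of-leading-coefficient argument for $t \in \puiseux$ itself is routine given \cref{eq:valuation}.
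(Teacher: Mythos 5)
Your proposal follows essentially the same route as the paper's proof: quantifier elimination over real closed fields applied to the formula with free variable $t$ (the paper cites \cite[Theorem~2.77]{basu_pollack_roy_algorithms} and the effective bounds of \cite[Theorem~14.6]{basu_pollack_roy_algorithms}), the Cauchy root bound \cite[Corollary~10.4]{basu_pollack_roy_algorithms} to get the threshold $2^{sd^{Cn}}$ past which all signs $\sign(Q_{ijk}(L))$ stabilise, and the observation that the sign of $Q_{ijk}(t)$ in $\puiseux$ is the sign of its leading coefficient, hence agrees with $\sign(Q_{ijk}(L))$ for large real $L$. Your worry about $m$-dependence of the constant $C$ resolves exactly as you hope, since the degree and coefficient bit-size bounds in \cite[Theorem~14.6]{basu_pollack_roy_algorithms} depend only on $d$, $s$, and $n$ (only the \emph{number} of output polynomials depends on $m$, which is irrelevant for the root bound).
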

\begin{proof}
Both sets $S(L)$ and $\bm{S}$
can be defined by the same first-order formula with a free variable $x_0$ that can be substituted by a real number $L$ (to obtain $S(L)$) or a Puiseux series $t$ (to obtain $\bm{S}$).   Let $\tau = \{+,\cdot,0,1,\leq\}$ and consider the first-order $\tau$-formula $\phi(x_0)$ given by
\[
\exists x_1, \dots, \exists x_n \bigland_{i = 1}^{m}(P_i(x_0,x) \diamond_i 0) \, .
\]
By Tarski's quantifier elimination theorem~\cite[Theorem~2.77]{basu_pollack_roy_algorithms}, there exists a quantifier-free formula $\psi(x_0)$ that is equivalent to $\phi(x_0)$ in the theory of real closed fields. The bounds on quantifier elimination given in \cite[Theorem~14.6]{basu_pollack_roy_algorithms} show that $\psi(x_0)$ can be taken to be of the form
\[
\biglor_{i = 1}^{I} \bigland_{j = 1}^{J_i} \left (\biglor_{k = 1}^{N_{ij}} \sign(Q_{ijk}(x_0)) = \delta_{ijk} \right ) \, ,
\]
where $\delta_{ijk} \in \{-1,0,1\}$ and $Q_{ijk} \in \Z[x_0]$ are univariate polynomials with integer coefficients. Furthermore, every $Q_{ijk}$ has degree bounded by $d^{O(n)}$ and every coefficient of every $Q_{ijk}$ has bit-size bounded by $sd^{O(n)}$. Hence, the Cauchy bound \cite[Corollary~10.4]{basu_pollack_roy_algorithms} implies that every real root of $Q_{ijk}$ has absolute value bounded by $2^{sd^{O(n)}}$ (provided that $Q_{ijk}$ is a non-zero polynomial). In particular, the sign of $Q_{ijk}(L)$ is the same for all $L > 2^{sd^{O(n)}}$. Therefore, either $S(L)$ is empty for all such $L$ or $S(L)$ is nonempty for all such $L$. Moreover, $Q_{ijk}(t) \in \puiseux$ is a Puiseux series with finitely many terms. By definition, the sign of this series is the same as the sign of $Q_{ijk}(L) \in \R$.
Hence, $\bm{S}$ is nonempty if and only if $S(L)$ is nonempty.
\end{proof}

In the next proposition and later, 
we use the following notation. Fix an instance $\phi$ of a max-average CSP and let $(c_i)_i$ be the rational constants used in the constraints of $\phi$. For every $i$ we denote $c_i = p_i/q_i$ where $p_i \in \Z$ and $q_i \in \N^*$ and we put $W \coloneqq 2 + \max_i\{|p_i| + q_i\}$.
\begin{proposition}\label{prop:toSDP}
There exists an absolute integer constant $M > 0$ such that 
an instance $\phi$ of a max-plus-average constraint satisfaction problem 
has a solution if and only if 
the real SDP $S_\phi(L)$ obtained from $S_\phi(t)$ (see~\cref{nonarchSDP}) by replacing $t$ with $L > 2^{2^{Mn\log_2 W}}$, is feasible.
\end{proposition}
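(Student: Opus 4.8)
The plan is to chain \cref{nonarchSDP} with \cref{real_parameter}. By \cref{nonarchSDP}, $\phi$ has a solution if and only if the Puiseux system $S_\phi(t)$ is feasible over $\puiseux$, so it suffices to transfer feasibility of $S_\phi(t)$ to feasibility of its real specialisation $S_\phi(L)$. \cref{real_parameter} is exactly the transfer principle we need, but it is stated for systems of sign conditions on polynomials $P_i \in \Z[x_0,x_1,\dots,x_n]$ with \emph{integer} coefficients, whereas $S_\phi(t)$ contains the constraints $\bm{x}_{i_0} = t^{c_i}$ with $c_i = p_i/q_i \in \Q$, and the fractional powers $t^{p_i/q_i}$ are not polynomials in the parameter $t$. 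The whole content of the proof is to rewrite $S_\phi(t)$ as an equivalent integer-coefficient polynomial system and then to bound its degree and number of variables carefully enough to land on the stated threshold.

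First I would eliminate the fractional powers by auxiliary variables. We may assume each variable occurs in at most one constant constraint (otherwise we dedupe or detect infeasibility directly), so there are $r \le n$ distinct constraints $x_{i_0} = c_i$. For each such constraint I introduce a fresh variable $\bm{y}_i$ with the sign condition $\bm{y}_i \ge 0$ and the integer-coefficient relation $\bm{y}_i^{q_i} = t^{p_i}$ (or, when $p_i < 0$, the relation $\bm{y}_i^{q_i}\,t^{|p_i|} = 1$), and replace the original constraint by $\bm{x}_{i_0} = \bm{y}_i$. Since $\puiseux$ is real closed, every nonnegative element has a unique nonnegative $q_i$-th root, so these relations force $\bm{y}_i = t^{c_i}$; hence the augmented system $\bm{S} \subseteq \puiseux^{\,n+r}$ is feasible exactly when $S_\phi(t)$ is. Crucially, $\bm{S}$ is now defined purely by polynomials in $\Z[t,\bm{x},\bm{y}]$: the max- and average-constraints have coefficients in $\{-1,0,1\}$ and the new relations have coefficients in $\{-1,1\}$. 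By the definition of $W$ we have $|p_i| + q_i \le W-2$ for every $i$, so each auxiliary polynomial has degree at most $W-2$, while the quadratic constraints contribute degree $2$; thus the overall degree satisfies $2 \le d \le W$.

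The reason for adding auxiliary variables, rather than substituting $u = t^{1/Q}$ with $Q = \operatorname{lcm}_i q_i$, is precisely to control the degree: that substitution would give a single parameter but raise the degree in the parameter to order $Q\cdot\max_i|c_i| \approx W^{\,n+1}$, which through the $d^{O(n)}$ factor of quantifier elimination would yield an inner exponent quadratic in $n$. Keeping $d \le W$ at the cost of doubling the number of variables to $N = n+r \le 2n$ is the trade-off that produces the claimed bound. Applying \cref{real_parameter} to $\bm{S}$ with $d \le W$, $N \le 2n$, and coefficient bit-size $s = O(1)$, its threshold $2^{sd^{CN}}$ is at most $2^{2^{Mn\log_2 W}}$ for an absolute constant $M$ (take, e.g., $M = 2C+1$, using $W \ge 2$). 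Finally, for real $L$ above this threshold \cref{real_parameter} gives that $\bm{S}$ is nonempty if and only if the real augmented system $S(L)$ is nonempty; and over $\R$ the same uniqueness of nonnegative roots forces $y_i = L^{c_i}$, so projecting away the auxiliary variables identifies $S(L)$ with the spectrahedron $S_\phi(L)$. Chaining these equivalences yields that $\phi$ is solvable if and only if $S_\phi(L)$ is feasible. The main obstacle is exactly the degree/variable bookkeeping just described: matching the single-exponential inner term $Mn\log_2 W$ forces the auxiliary-variable encoding rather than the more obvious parameter rescaling.
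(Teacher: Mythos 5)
Your proof is correct and takes essentially the same route as the paper: chain \cref{nonarchSDP} with \cref{real_parameter} by clearing the fractional powers $t^{p_i/q_i}$ into integer-coefficient polynomial constraints of degree at most $W$ with coefficients in $\{-1,0,1\}$ (using nonnegativity and uniqueness of nonnegative roots in a real closed field), then absorb the constants into $M$. The only deviation is that the paper needs no auxiliary variables at all---it substitutes directly on the existing variable, replacing $\bm{x}_{i_0} = t^{p_i/q_i}$ by $\bm{x}_{i_0}^{q_i} = t^{p_i}$ (or $t^{-p_i}\bm{x}_{i_0}^{q_i} = 1$ when $p_i < 0$)---so your $\bm{y}_i$'s are redundant and your closing claim that the auxiliary-variable encoding is \emph{forced} by the bound is inaccurate, though harmless, since the factor-two increase in the number of variables is absorbed into the absolute constant $M$.
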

\begin{proof}
Consider the system of inequalities given in \cref{eq:puiseuxval}. Since $\bm{x}_i \ge 0$ for all $i \in \{1,\dots,n\}$, any equality of the form $\bm{x}_i = t^{c_j} = t^{p_j/q_j}$, for an integer $q_j \geq 1$, can be replaced with an equivalent equality $\bm{x}_i^{q_j} = t^{p_j}$ if $p_j \ge 0$ and $t^{-p_j}\bm{x}_i^{q_j} = 1$ if $p_j < 0$.
Likewise, any inequality of the form $\bm{x}_i \le t^{p_j/q_j} \bm{x}_l$ can be replaced by the equivalent inequality $\bm{x}_i^{q_j} \le t^{p_j}\bm{x}_l^{q_j}$ if $p_j \ge 0$ and $t^{-p_j}\bm{x}_i^{q_j} \le \bm{x}_l^{q_j}$ if $p_j < 0$. 
If we further replace the parameter $t$ by a variable symbol $x_0$, \cref{eq:puiseuxval} becomes a system of polynomial inequalities of degree at most $W$ and such that the coefficients of the polynomials appearing in these inequalities are integers in $\{-1,0,1\}$. Hence, by \cref{real_parameter}, there exists $M > 0$ such that the system \cref{eq:puiseuxval} has a solution over Puiseux series if and only if the system obtained by replacing $t$ with $L > 2^{2^{Mn\log_2 W}}$ has a solution over the real numbers. Therefore, the claim follows from \cref{nonarchSDP}.
\end{proof}

In \cref{prop:toSDP}, we obtain a (real) semidefinite program, whose size, however, is exponential in the size of $\phi$ and therefore does not provide a polynomial-time reduction to the semidefinite program feasiblity problem. 
We now show how to reduce to a semidefinite program of polynomial representation size. This is based on the following result.

\begin{theorem}\label{thm:large-constants}
    For every $n \in {\mathbb Z}$, the singleton set $\{2^{n}\}$ is the projection of a spectrahedron whose representation size is polynomial in the bit-size of $n$. 
\end{theorem}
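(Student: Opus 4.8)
The plan is to realize $2^n$ as a coordinate of a point that is forced by a short chain of spectrahedral constraints, each of polynomial size, rather than writing down a single linear matrix inequality whose entries are the doubly-exponential number $2^{2^n}$. The key observation is that the set of spectrahedral shadows is closed under primitive positive definability (as noted just after \cref{def:sdfp}), so it suffices to exhibit a primitive positive formula over $(\R;+,\cdot)$, of polynomial length in the bit-size of $n$, whose unique satisfying value for a designated free variable is $2^n$. Squaring is spectrahedrally expressible: the relation $y = z^2$ is the projection of the spectrahedron $\{(y,z) : \left(\begin{smallmatrix} y & z \\ z & 1\end{smallmatrix}\right) \succeq 0 \text{ and } \left(\begin{smallmatrix} y & z \\ z & 1\end{smallmatrix}\right) \not\succ 0\}$, but since strict inequalities are not directly available in an LMI, I would instead pin down squaring via the two inequalities $y \ge z^2$ and $y \le z^2$, each of which is a $2\times 2$ positive semidefiniteness condition (exactly as in the proof of \cref{nonarchSDP}), yielding $y = z^2$ as a conjunction of two spectrahedral constraints.

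With an exact squaring gadget in hand, I would implement repeated squaring. Writing $n$ in binary with $b = O(\log|n|)$ bits, I introduce auxiliary variables $z_0, z_1, \dots, z_b$ with $z_0 = 2$ and the relations $z_{j+1} = z_j^2$, so that $z_j = 2^{2^j}$. This is a chain of $O(\log|n|)$ squaring gadgets, hence of polynomial representation size. To assemble $2^n = \prod_{j : \text{bit } j \text{ of } n \text{ is } 1} 2^{2^j}$, I chain together multiplication gadgets: the relation $w = uv$ is itself spectrahedrally definable (again via a $2\times2$ determinant-type condition together with sign control, or by the polarization identity $uv = \tfrac14((u+v)^2 - (u-v)^2)$ which reduces multiplication to three squarings and additions, all of which are linear). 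Selecting the factors corresponding to the one-bits of $n$ and multiplying them through a telescoping chain of $O(\log|n|)$ products gives a designated variable equal to $2^n$. The case $n<0$ is handled by an extra gadget forcing a variable $x$ with $x \cdot 2^{|n|} = 1$, i.e.\ a single multiplication constraint, so that $x = 2^n$.

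The final step is bookkeeping: each gadget contributes a fixed number of $2\times2$ or $1\times1$ blocks with entries in $\{-1,0,1,2\}$ plays into a single block-diagonal LMI, and because there are only $O(\log|n|)$ gadgets, the total representation size is polynomial in the bit-size of $n$. I must also verify uniqueness, namely that the constraints force the designated coordinate to equal exactly $2^n$ and not some spurious sign variant; this is where care is needed, since squaring alone only determines a value up to sign. I would resolve this by additionally imposing the half-plane constraints $z_j \ge 0$ (each a trivial $1\times1$ LMI) throughout the chain, so that every intermediate quantity is nonnegative and the square roots are pinned to their positive branch, making the whole system have $2^n$ as its unique feasible value for the output variable.

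I expect the main obstacle to be exactly this control of signs and uniqueness under the constraint that linear matrix inequalities can only express non-strict (closed) conditions: one cannot directly write $z = w^2$ as a single clean LMI, so the argument hinges on combining the two one-sided inequalities $z \ge w^2$ and $z \le w^2$ together with nonnegativity to recover an exact equality on the correct branch, and on checking that this exactness propagates through the entire repeated-squaring-and-multiplication chain without introducing extraneous feasible points.
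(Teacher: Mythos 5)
Your proposal has a genuine gap at its core: the ``exact squaring gadget'' $y = z^2$ is not spectrahedrally definable, and no amount of sign control can fix this. Spectrahedra and their projections are \emph{convex} sets, and every conjunct in a primitive positive formula over spectrahedral constraints must itself define a convex set given by an LMI whose entries are \emph{affine} in the variables. The inequality $y \ge z^2$ is fine: it is equivalent to $\bigl(\begin{smallmatrix} y & z \\ z & 1 \end{smallmatrix}\bigr) \succeq 0$, which is affine in $(y,z)$, and this convex side is indeed what \cref{nonarchSDP} uses (there the constraint is $\bm{x}_{i_0}^2 \le \bm{x}_{i_1}\bm{x}_{i_2}$, never the reverse). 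But the reverse inequality $y \le z^2$ defines the region below a parabola, which is non-convex even after imposing $z \ge 0$ (the points $(z,y)=(0,0)$ and $(2,4)$ satisfy it while their midpoint $(1,2)$ does not), so it cannot be a spectrahedron or a spectrahedral shadow, and in particular is not ``a $2\times 2$ positive semidefiniteness condition'' --- the entry $z^2 - y$ is quadratic, not affine. The same objection kills your multiplication gadget $w = uv$ (a non-convex hypersurface; polarization only reduces it to exact squarings, which is the same problem) and your inversion gadget $x \cdot 2^{|n|} = 1$ for negative $n$ (the set $\{x,z \ge 0,\ xz \le 1\}$ is likewise non-convex). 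You correctly sensed that exactness is the crux, but located the difficulty at sign ambiguity rather than at convexity, and nonnegativity constraints do not repair a non-convex relation.

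The paper's proof confronts exactly this obstruction and resolves it differently: it imposes \emph{only} the one-sided convex inequalities $x_0 \ge 2^{b_0}$, $x_\ell \ge 2^{b_\ell} x_{\ell-1}^2$ (each a genuine $2\times 2$ LMI block, with signed binary digits $b_i = \sign(n)\bar{b}_i$ handling negative $n$ directly), whose projection onto $x_k$ is the half-line $\{x \ge 2^n\}$, not the singleton. The matching upper bound $x_k \le 2^n$ is then obtained not by a reversed quadratic inequality but by \emph{SDP duality}: one considers the primal SDP minimizing $x_k$ over this spectrahedron, notes it is strictly feasible and bounded below by $2^n$, and invokes strong duality (\cref{thm:strong-dual}) to conclude that the dual attains the value $2^n$. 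Adjoining the dual feasibility constraints ($Y \succeq 0$ and the linear conditions $\langle A_i, Y\rangle = c_i$) together with the linear coupling $\langle B, Y \rangle = x_k$ --- all of which are convex, affine-in-variable constraints --- forces $x_k \le 2^n$ by weak duality, with attainment guaranteeing the point $2^n$ is realized. This duality step is the key idea missing from your proposal, and without it or some substitute for the non-convex ``$\le$'' side, your repeated-squaring chain only ever pins down a half-line.
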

\begin{proof}
To prove the claim, we extend the constructions of SDPs with solutions of doubly-exponential size given in \cite{KhachiyanPorkolab,Ramana}. For $n = 0$, the set $\{x \in \R \mid x = 1\}$ is the desired spectrahedron. Suppose that $n 
\neq 0$ and let $|n| = \sum_{i = 0}^{k}\bar{b}_{k-i}2^i$ be the binary representation of $|n|$ for $\bar{b}_i 
\in \{0,1\}$. Let $b_i = \sign(n)\bar{b}_i \in \{-1,0,1\}$ for all $i$, so that $n = \sum_{i = 0}^{k}b_{k-i}2^i$.
We first show that the set $\{x \in {\mathbb R} \mid x \geq 2^{n} \}$ 
    is the projection of a spectrahedron $S$ of representation size that is polynomial in the bit-size of $n$. Let $S$ be defined by the linear matrix inequality
    \begin{align*}
    A_0 x_0 + \cdots + A_k x_k - B = 
    \begin{pmatrix}
        x_0 & 1 & \\
        1 & 2^{-b_0} \\
        & & x_1 & x_0 \\
        & & x_0 & 2^{-b_1} \\
        & & & & \ddots \\
        & & & & & x_k & x_{k-1} \\
        & & & & & x_{k-1} & 2^{-b_k} 
    \end{pmatrix} \succeq 0 \, .
    \end{align*}
This LMI expresses that
\[
x_0 \ge 2^{b_0}, \, x_1 \ge 2^{b_1}x_0^2, \, \dots, \, x_k \ge 2^{b_k}x_{k-1}^2 \, .
\]
By induction, for every $\ell \ge 0$ we get $x_{\ell} \ge 2^{b_0 2^{\ell} + b_1 2^{\ell-1} + \dots + b_\ell 2^0}$. Moreover, the vector $(x_0,\dots,x_k)$ with $x_{\ell} = 2^{b_0 2^{\ell} + b_1 2^{\ell-1} + \dots + b_\ell 2^0}$ 
for every $\ell \in \{0,\dots,k\}$ 
satisfies this LMI, so we obtain the desired set by projecting $S$ to the last coordinate. We use strong duality to obtain the singleton set $\{2^{n}\}$ as follows. Consider the SDP 
    \[
    \inf \, (0 \cdot x_0 + \cdots + 0 \cdot x_{k-1} + 1 \cdot x_k)
    \]
    over the spectrahedron $S$ specified above. Then, the dual is of the form $\sup \, \langle B, Y \rangle$ 
    such that $\langle A_i, Y \rangle = c_i$ for every $i \in \{0,\dots,k\}$ and $Y \succeq 0$, where
    $(c_0,\dots,c_k) = (0,\dots,0,1)$. 
    Clearly, the primal is strictly feasible and bounded by $2^{n}$ from below, so we may use strong duality (\cref{thm:strong-dual})
    and obtain that $\sup \, \langle B,Y \rangle = 2^{n}$ and that the supremum is attained for some $Y$. 
    Then the following primitive positive formula over spectrahedral sets 
    with one free variable $x_k$ defines the singleton relation $\{2^{n}\}$:
    \begin{equation}\label{eq:singleton}
    \begin{aligned}
        \exists x_0,\dots,x_{k-1}, y_{1,1},\dots,y_{m,m}: A_0 x_0 + \cdots + A_k x_k - B \succeq 0 & \\
        \wedge \, \langle A_0,Y \rangle = 0 \, \wedge \cdots \wedge \, \langle A_{k-1},Y \rangle = 0 \, \wedge \, \langle A_k,Y \rangle = 1 
        & \wedge Y \succeq 0 \\
        & \wedge \langle B,Y \rangle = x_k 
    \end{aligned}
    \end{equation}
    where $Y$ denotes $(y_{i,j})_{i,j \in \{1,\dots,m\}}$ for $m = 2(k+1)$.
    As we have mentioned before, spectrahedral shadows are closed under primitive positive definability, and the representation size of the resulting LMI is polynomially bounded by the representation sizes for the conjuncts. 
    Indeed, the matrices $A_i$ and $B$ 
    have coefficients in ${0, \pm 1/2, \pm 1, \pm 2}$ and are of size $k \times k$ which is polynomial in the bit-size of $n$. Hence, the system of matrix inequalities \cref{eq:singleton} can be encoded using a number of bits that is polynomial in the bit-size of $n$. This system of inequalities can be then encoded by a single block-diagonal LMI as in the proof of \cref{nonarchSDP}. This increases the number of necessary bits by a polynomial factor.
    We thus obtain an LMI of 
    representation size that is polynomial in $k$. 
\end{proof}

\begin{corollary}\label{cor:MaxAv-SDP}
    There is a polynomial-time reduction from the max-average constraint satisfaction problem to the feasibility problem for semidefinite programs. 
\end{corollary}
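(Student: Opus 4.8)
The plan is to assemble the three preceding results into a single polynomial-time construction. Given an instance $\phi$ of the max-average CSP with variables $x_1,\dots,x_n$ and rational constants $c_i = p_i/q_i$, I will produce, in polynomial time, an LMI whose spectrahedron is nonempty if and only if $\phi$ is solvable, and then appeal to \cref{def:sdfp}. The starting point is \cref{prop:toSDP}: the instance $\phi$ is solvable if and only if the real system $S_\phi(L)$ is feasible, for any $L$ exceeding the doubly-exponential threshold $2^{2^{Mn\log_2 W}}$, where $W = 2 + \max_i\{|p_i| + q_i\}$. The only thing standing between $S_\phi(L)$ and a genuine polynomial-size SDP is that the substitution $t \mapsto L$ turns the equality constraints into conditions $\bm{x}_i^{q_i} = L^{p_i}$ involving the doubly-exponential constant $L^{p_i}$.

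First I would choose $L$ cleverly. Rather than an arbitrary real above the threshold, I take $L = 2^N$ where $N$ is a common multiple of all denominators $q_i$ that is also large enough that $N > 2^{Mn\log_2 W}$; concretely one may set $N = (\prod_i q_i)\cdot \lceil 2^{Mn\log_2 W}\rceil$. Two facts make this work: the bit-size of $N$ is polynomial (the product of the denominators has polynomial bit-size, and $\lceil 2^{Mn\log_2 W}\rceil$ has bit-size $O(Mn\log_2 W)$), while $L = 2^N$ still exceeds the threshold, so \cref{prop:toSDP} applies. The crucial payoff of requiring $q_i \mid N$ is that each equality constraint becomes $\bm{x}_i^{q_i} = 2^{Np_i}$, whose unique nonnegative solution is $\bm{x}_i = 2^{Np_i/q_i} = 2^{m_i}$ with $m_i := Np_i/q_i \in \Z$ an integer of polynomial bit-size. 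Thus, after rewriting $\bm{x}_i^{q_i} = 2^{Np_i}$ (using $\bm{x}_i \ge 0$) as the singleton constraint $\bm{x}_i = 2^{m_i}$, the high-degree powers disappear and only integer powers of two survive.

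It then remains to express the resulting system as a single LMI of polynomial size. The inequalities $\bm{x}_{i_0} \le \bm{x}_{i_1} + \cdots + \bm{x}_{i_k}$, the quadratic conditions $\bm{x}_{i_0}^2 \le \bm{x}_{i_1}\bm{x}_{i_2}$, and the nonnegativity conditions $\bm{x}_{i_0}\ge 0$ are directly small LMIs (scalar, respectively $2\times 2$, blocks), exactly as in the proof of \cref{nonarchSDP}. For each singleton constraint $\bm{x}_i = 2^{m_i}$ I invoke \cref{thm:large-constants}, which exhibits $\{2^{m_i}\}$ as the projection of a spectrahedron of representation size polynomial in the bit-size of $m_i$, hence polynomial in the size of $\phi$. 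Finally I conjoin all of these: since spectrahedral shadows are closed under primitive positive definability and, as recorded in the preliminaries, the representation size of the LMI for a primitive positive combination is bounded by the sum of the representation sizes of its conjuncts, the conjunction yields one LMI of polynomial representation size whose spectrahedron is nonempty exactly when $S_\phi(L)$ is feasible. All the data (computing $W$, $N$, the integers $m_i$, the matrices, and the shadows from \cref{thm:large-constants}) are computable in polynomial time, so this is a polynomial-time many-one reduction.

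I expect the main obstacle to be the second step, namely the bookkeeping around the constants. The subtlety is that $L$ is genuinely doubly-exponential, so one must never write it (or the coefficients $L^{p_i}$) out in binary; the whole point is that $L = 2^N$ is represented only through the polynomial-bit-size exponent $N$, and that the constants $2^{m_i}$ are never materialised as numbers but only as spectrahedral shadows via \cref{thm:large-constants}. The divisibility choice $q_i \mid N$ is what keeps the exponents $m_i$ integral and thereby lets \cref{thm:large-constants} apply verbatim, avoiding any need to encode fractional powers $2^{Np_i/q_i}$ or high-degree polynomial constraints; verifying that this choice is simultaneously compatible with the threshold bound of \cref{prop:toSDP} and still of polynomial bit-size is the one place where a little care is required.
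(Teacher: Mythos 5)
Your proposal is correct and follows essentially the same route as the paper: both choose $L = 2^K$ with the exponent $K$ a polynomial-bit-size integer divisible by $\prod_i q_i$ (the paper takes $K = 2\bigl(\prod_i q_i\bigr)W^{Mn}$), so that each constraint $x_i = L^{c_j}$ becomes $x_i = 2^{m}$ for an integer $m$ of polynomial bit-size, which is then encoded via the spectrahedral shadows of \cref{thm:large-constants} and conjoined using closure under primitive positive definability. The only nit is your concrete choice $N = \bigl(\prod_i q_i\bigr)\lceil 2^{Mn\log_2 W}\rceil$: when all $q_i = 1$ this gives $N = W^{Mn}$, so $L$ only meets rather than strictly exceeds the threshold of \cref{prop:toSDP}; multiplying by $2$ (as the paper does) fixes this degenerate case.
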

\begin{proof}
    We have already specified a first reduction to a (real) SDP in \cref{prop:toSDP}. 
    To obtain a proper polynomial-time reduction, we modify this SDP as follows. 
    Fix a max-average instance $\phi$; let $c_i$, $p_i$, $q_i$, and $W$ be as introduced before \cref{prop:toSDP}. Define $D \coloneqq \prod_i q_i$ and $K \coloneqq 2D W^{Mn} = D2^{1 + Mn \log_2 W}$, where $M$ is the integer constant from \cref{prop:toSDP}. Furthermore, let $L \coloneqq 2^K$.
    By \cref{prop:toSDP}, the instance $\phi$ has a solution if and only if $S_\phi(L)$ 
    has a solution over the real numbers.
    This SDP has constraints of the form $x_i = L^{c_j} = 2^{c_jK}$.
    Since $K$ is an integer divisible by $D$, the number $c_jK$ is an integer for all $j$. Hence, by introducing new variables, we may 
    replace the constraints $x_j = 2^{c_jK}$ by the corresponding LMIs of small representation size from \cref{thm:large-constants}.
    The resulting SDP can be computed in polynomial time. 
    Furthermore, it is feasible if and only if the original SDP is feasible, which proves the statement. 
    \end{proof}

\begin{corollary}
    There is a polynomial-time reduction from stopping simple stochastic games to the feasibility problem for semidefinite programs. 
\end{corollary}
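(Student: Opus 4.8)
The plan is to compose the two reductions that have already been established. By \cref{value_ineq}, deciding whether a distinguished state $k$ of a stopping simple stochastic game has value at least $1/2$ is equivalent to the solvability of the system \cref{eq:gameval}; and by \cref{cor:MaxAv-SDP}, solvability of a max-average CSP instance reduces in polynomial time to the feasibility problem for semidefinite programs. So the whole argument reduces to (i) exhibiting \cref{eq:gameval} as a genuine max-average CSP instance constructible in polynomial time from the game, and (ii) observing that the composition of two polynomial-time reductions is again polynomial-time.

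First I would recall that, via the binary-search argument behind \cref{thm:smpg-to-stopping}, the relevant decision problem for a stopping simple stochastic game is: given the game $\dgraph = ([n],E,\payoff)$ and a state $k$, is the value at $k$ at least $1/2$? Given such an instance, I would write down the system \cref{eq:gameval} and check that every line is an allowed max-average constraint or is primitively positively definable from them (as noted after \cref{def:mma}): the $\Maxvertices$-lines are max-constraints; the $\Minvertices$-lines are min-constraints; each node in $\Randvertices$ has out-degree exactly two with distribution $(1/2,1/2)$, so its line is exactly an average constraint $x_i \le \frac{x_{j_1}+x_{j_2}}{2}$; the lines $x_{\Lose}=0$ and $x_{\Win}=1$ are equality-to-constant constraints; and $x_k \ge 1/2$ is of the pp-definable form $x_0 \ge c$. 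Reading these constraints off the graph takes time polynomial in the size of the game, so the resulting instance $\phi$ has polynomial size and is produced in polynomial time.

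A small point to settle is the solution domain: \cref{value_ineq} speaks of solutions in $\Q_{\geq 0}^n$, whereas a max-average CSP asks for solutions over $\Q \cup \{-\infty\}$. To align the two I would simply adjoin the nonnegativity constraints $x_i \ge 0$ for all $i$ (again of the pp-definable form $x_0 \ge c$), so that any solution of $\phi$ over $\Q \cup \{-\infty\}$ automatically avoids $-\infty$ and hence lies in $\Q_{\geq 0}^n$; the converse inclusion $\Q_{\geq 0}^n \subseteq \Q \cup \{-\infty\}$ is immediate. Thus $\phi$ is solvable as a max-average CSP if and only if \cref{eq:gameval} has a solution in $\Q_{\geq 0}^n$, which by \cref{value_ineq} holds if and only if the value at $k$ is at least $1/2$.

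Finally, I would apply \cref{cor:MaxAv-SDP} to $\phi$, obtaining in polynomial time an LMI that is feasible exactly when $\phi$ is solvable, hence exactly when the value at $k$ is at least $1/2$; composing the two polynomial-time maps yields the claimed reduction. I do not expect a genuine obstacle here, since all the substantive work is already done by \cref{value_ineq} and \cref{cor:MaxAv-SDP}. The only things requiring care are the constraint-type bookkeeping in step (i) and the $\Q_{\geq 0}$-versus-$\Q\cup\{-\infty\}$ domain matching, both of which are dispatched by the primitive positive definability of the min-, $\geq c$-, and nonnegativity constraints.
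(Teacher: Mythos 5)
Your proposal is correct and takes essentially the same route as the paper, whose entire proof is the one-line instruction to combine \cref{value_ineq} with \cref{cor:MaxAv-SDP}. The details you supply---checking each line of \cref{eq:gameval} against \cref{def:mma}, and reconciling the $\Q_{\geq 0}^n$ domain with $\Q \cup \{-\infty\}$ via pp-definable constraints $x_i \geq 0$---are exactly the routine bookkeeping the paper leaves implicit, and your handling of both points is sound.
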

\begin{proof}
    Combine \cref{cor:MaxAv-SDP} with \cref{value_ineq}.
\end{proof}

We now present a strengthening of Corollary~\ref{cor:MaxAv-SDP} which generalises it from max-average to max-plus-average CSPs.

\begin{theorem}
    There is a polynomial-time reduction from the max-plus-average constraint satisfaction problem to the feasibility problem for semidefinite programs.
\end{theorem}
\begin{proof}
    For any integer $k$, the singleton set 
    $\{2^k\}$ can be efficiently represented as a projection of a spectrahedron (Theorem~\ref{thm:large-constants}). Suppose that this spectrahedron is given by
    \[
    Q^{(0)} + x_1 Q^{(1)} + \dots + x_nQ^{(n)} \succeq 0,
    \]
    and that the projection on the last coordinate
    $x_n$ is the singleton set $\{2^k\}$. 
    Consider now the following spectrahedron, with one more variable $x_0$:
    \[
    x_{0}Q^{(0)} + x_1 Q^{(1)} + \dots + x_nQ^{(n)} \succeq 0 \wedge x_{0} \ge 0.
    \]
We claim that the projection of this spectrahedron to the coordinates $(x_0,x_n)$ is precisely the closed half-line defined by \[ x_n = 2^k x_0 \wedge x_0 \ge 0. \] 
The proof has two steps. First, consider the projection to $(x_0,x_n)$ of the subset defined by  
\[
x_{0}Q^{(0)} + x_1 Q^{(1)} + \dots + x_nQ^{(n)} \succeq 0 \wedge x_{0} > 0.
\]
This projection is the open half-line
defined by $x_n = 2^k x_0 \wedge  x_0 > 0$.
Indeed, take a point $(x_0,x_n)$ such that $x_0 > 0$. Moreover, suppose that we can find $(x_1,\dots,x_{n-1})$ such that the matrix $x_{0}Q^{(0)} + x_1 Q^{(1)} + \dots + x_nQ^{(n)}$ is positive semidefinite.
Then the matrix
\[ \frac{1}{x_0} ( x_{0}Q^{(0)} + x_1 Q^{(1)} + \dots + x_nQ^{(n)} ) = Q^{(0)} + \frac{x_1}{x_0} Q^{(1)} + \dots + \frac{x_n}{x_0}Q^{(n)} \]
is also positive semidefinite (here we use the assumption that $x_0 > 0$). Therefore, we get $\frac{x_n}{x_0} = 2^k$ by the definition of the matrices $Q^{(i)}$. In particular, the projection is included in the half-line. 

Conversely, if $(x_0,x_n)$ are on the open half-line, then $\frac{x_n}{x_0} = 2^k$ and the definition of the matrices $Q^{(i)}$ implies that there exist $(\tilde{x}_1, \dots, \tilde{x}_n)$ such that $Q^{(0)} + \tilde{x}_1 Q^{(1)} + \dots + \tilde{x}_{n-1}Q^{(n-1)} + \frac{x_n}{x_0}Q^{(n)} \succeq 0$. Hence $x_0Q^{(0)} + x_0\tilde{x}_1 Q^{(1)} + \dots + x_0\tilde{x}_{n-1}Q^{(n-1)} + x_nQ^{(n)} \succeq 0$ and $(x_0,x_n)$ belong to the projection.

Second, since the projection of a spectrahedron is convex, the projection of \[x_{0}Q^{(0)} + x_1 Q^{(1)} + \dots + x_nQ^{(n)} \succeq 0 \wedge x_{0} \ge 0\]
is either the open half-line or the closed half-line (we cannot add to the projection any other point $(x_0,x_n)$ with $x_0 = 0$ without destroying the convexity of the projection). Since $(0,0)$ is in the projection, we get that the projection is exactly the closed half-line as claimed.

We have thus expressed the constraints of the form $x_0 = 2^k x_1$ as small SDPs. Since the constraint $x_0 \le 2^k x_1$ can be simulated by two constraints of the form $x_0 \le y$, $y = 2^k x_1$, we can also express constraints of the form $x_0 \le 2^k x_1$ as small SDPs.
By using the same reasoning as in the proof of Corollary~\ref{cor:MaxAv-SDP}, 
this allows us to simulate the non-Archimedean constraints of the form $x_0 \le t^c x_1$, and thus max-plus-average CSPs by \cref{nonarchSDP}. 
\end{proof}

\bibliography{phd_bibliography,global}

\def\cprime{$'$} \def\cprime{$'$} \def\cprime{$'$}
\begin{thebibliography}{10}

\bibitem{polyhedra_equiv_mean_payoff}
M.~Akian, S.~Gaubert, and A.~Guterman.
\newblock Tropical polyhedra are equivalent to mean payoff games.
\newblock {\em Int. J. Algebra Comput.}, 22(1):125001 (43 pages), 2012.
\newblock \href {https://doi.org/10.1142/S0218196711006674}
  {\path{doi:10.1142/S0218196711006674}}.

\bibitem{MaxPlus}
Marianne Akian, Ravindra Bapat, and St\'ephane Gaubert.
\newblock Max-plus algebra.
\newblock In {\em Chapter 25 in “Handbook of linear algebra”}, volume~39 of
  {\em Discrete Mathematics and Its Applications}. Chapman and Hall, 2007.

\bibitem{combinatorial_mean_payoff}
X.~Allamigeon, P.~Benchimol, S.~Gaubert, and M.~Joswig.
\newblock Combinatorial simplex algorithms can solve mean payoff games.
\newblock {\em SIAM J. Optim.}, 24(4):2096--2117, 2014.
\newblock \href {https://doi.org/10.1137/140953800}
  {\path{doi:10.1137/140953800}}.

\bibitem{tropical_simplex}
X.~Allamigeon, P.~Benchimol, S.~Gaubert, and M.~Joswig.
\newblock Tropicalizing the simplex algorithm.
\newblock {\em SIAM J. Discrete Math.}, 29(2):751--795, 2015.
\newblock \href {https://doi.org/10.1137/130936464}
  {\path{doi:10.1137/130936464}}.

\bibitem{AllamigeonBenchimolGaubertJoswig:2021}
X.~Allamigeon, P.~Benchimol, S.~Gaubert, and M.~Joswig.
\newblock What tropical geometry tells us about the complexity of linear
  programming.
\newblock {\em SIAM Rev.}, 63(1):123--164, 2021.
\newblock \href {https://doi.org/10.1137/20m1380211}
  {\path{doi:10.1137/20m1380211}}.

\bibitem{issac2016jsc}
X.~Allamigeon, S.~Gaubert, and M.~Skomra.
\newblock Solving generic nonarchimedean semidefinite programs using stochastic
  game algorithms.
\newblock {\em J. Symbolic Comput.}, 85:25--54, 2018.
\newblock \href {https://doi.org/10.1016/j.jsc.2017.07.002}
  {\path{doi:10.1016/j.jsc.2017.07.002}}.

\bibitem{ABKM08}
Eric Allender, Peter B{\"u}rgisser, Johan Kjeldgaard-Pedersen, and Peter~Bro
  Miltersen.
\newblock On the complexity of numerical analysis.
\newblock {\em SIAM J. Comput.}, 38(5):1987--2006, 2008/09.

\bibitem{andersson_miltersen}
D.~Andersson and P.~B. Miltersen.
\newblock The complexity of solving stochastic games on graphs.
\newblock In {\em Proceedings of the 20th International Symposium on Algorithms
  and Computation (ISAAC)}, volume 5878 of {\em Lecture Notes in Comput. Sci.},
  pages 112--121. Springer, 2009.
\newblock \href {https://doi.org/10.1007/978-3-642-10631-6_13}
  {\path{doi:10.1007/978-3-642-10631-6_13}}.

\bibitem{atsma}
A.~Atserias and E.~Maneva.
\newblock Mean-payoff games and the max-atom problem, 2009.
\newblock URL:
  \url{https://www.cs.upc.edu/~atserias/papers/mean-payoff-games-max-atom/mpma.pdf}.

\bibitem{AvisF17}
David Avis and Oliver Friedmann.
\newblock An exponential lower bound for {C}unningham's rule.
\newblock {\em Math. Program.}, 161(1-2):271--305, 2017.
\newblock URL: \url{https://doi.org/10.1007/s10107-016-1008-4}, \href
  {https://doi.org/10.1007/S10107-016-1008-4}
  {\path{doi:10.1007/S10107-016-1008-4}}.

\bibitem{basu_pollack_roy_algorithms}
S.~Basu, R.~Pollack, and M.-F. Roy.
\newblock {\em Algorithms in Real Algebraic Geometry}, volume~10 of {\em
  Algorithms Comput. Math.}
\newblock Springer, Berlin, 2006.
\newblock \href {https://doi.org/10.1007/3-540-33099-2}
  {\path{doi:10.1007/3-540-33099-2}}.

\bibitem{ben-tal_nemirovski}
A.~Ben-Tal and A.~Nemirovski.
\newblock {\em Lectures on Modern Convex Optimization: Analysis, Algorithms,
  and Engineering Applications}, volume~2 of {\em MOS-SIAM Ser. Optim.}
\newblock SIAM, Philadelphia, PA, 2001.
\newblock \href {https://doi.org/10.1137/1.9780898718829}
  {\path{doi:10.1137/1.9780898718829}}.

\bibitem{stochasticchapter}
N.~Bertrand, P.~Bouyer-Decitre, N.~Fijalkow, and M.~Skomra.
\newblock Stochastic games.
\newblock In N.~Fijalkow, editor, {\em Games on Graphs}. 2023.

\bibitem{Max-atoms}
Marc Bezem, Robert Nieuwenhuis, and Enric Rodr\'{i}guez-Carbonell.
\newblock The max-atom problem and its relevance.
\newblock In {\em Proceedings of the International Conference on Logic for
  Programming, Artificial Intelligence, and Reasoning (LPAR)}, pages 47--61,
  2008.

\bibitem{BodKummerThom}
Manuel Bodirsky, Mario Kummer, and Andreas Thom.
\newblock Spectrahedral shadows and completely positive maps on real closed
  fields.
\newblock {\em J. Eur. Math. Soc.}, 2024.

\bibitem{SG-SDP}
Manuel Bodirsky, Georg Loho, and Mateusz Skomra.
\newblock Reducing stochastic games to semidefinite programming.
\newblock In Keren Censor{-}Hillel, Fabrizio Grandoni, Jo{\"{e}}l Ouaknine, and
  Gabriele Puppis, editors, {\em 52nd International Colloquium on Automata,
  Languages, and Programming, {ICALP} 2025, July 8-11, 2025, Aarhus, Denmark},
  volume 334 of {\em LIPIcs}, pages 145:1--145:15. Schloss Dagstuhl -
  Leibniz-Zentrum f{\"{u}}r Informatik, 2025.
\newblock URL: \url{https://doi.org/10.4230/LIPIcs.ICALP.2025.145}, \href
  {https://doi.org/10.4230/LIPICS.ICALP.2025.145}
  {\path{doi:10.4230/LIPICS.ICALP.2025.145}}.

\bibitem{BodirskyMaminoTropical}
Manuel Bodirsky and Marcello Mamino.
\newblock Tropically convex constraint satisfaction.
\newblock {\em Theory of Computing Systems}, 62(3):481--509, 2018.
\newblock An extended abstract of the paper appeared under the title
  ``Max-Closed Semilinear Constraints'' in the proceedings of CSR'16; preprint
  available under ArXiv:1506.04184.

\bibitem{BorosElbassioniGurvichMakino:2017}
Endre Boros, Khaled Elbassioni, Vladimir Gurvich, and Kazuhisa Makino.
\newblock A convex programming-based algorithm for mean payoff stochastic games
  with perfect information.
\newblock {\em Optim. Lett.}, 11(8):1499--1512, 2017.
\newblock \href {https://doi.org/10.1007/s11590-017-1140-y}
  {\path{doi:10.1007/s11590-017-1140-y}}.

\bibitem{BradfieldIgor}
Julian~C. Bradfield and Igor Walukiewicz.
\newblock The mu-calculus and model checking.
\newblock In Edmund~M. Clarke, Thomas~A. Henzinger, Helmut Veith, and Roderick
  Bloem, editors, {\em Handbook of Model Checking}, pages 871--919. Springer,
  2018.
\newblock \href {https://doi.org/10.1007/978-3-319-10575-8\_26}
  {\path{doi:10.1007/978-3-319-10575-8\_26}}.

\bibitem{CaludeJKLS22}
Cristian~S. Calude, Sanjay Jain, Bakhadyr Khoussainov, Wei Li, and Frank
  Stephan.
\newblock Deciding parity games in quasi-polynomial time.
\newblock {\em {SIAM} J. Comput.}, 51(2):17--152, 2022.
\newblock URL: \url{https://doi.org/10.1137/17m1145288}, \href
  {https://doi.org/10.1137/17M1145288} {\path{doi:10.1137/17M1145288}}.

\bibitem{Canny:1988:AGC:62212.62257}
John Canny.
\newblock Some algebraic and geometric computations in {PSPACE}.
\newblock In {\em Proceedings of the Twentieth Annual ACM Symposium on Theory
  of Computing (STOC)}, pages 460--467, New York, NY, USA, 1988. ACM.

\bibitem{ColcombetFGO22}
Thomas Colcombet, Nathana{\"{e}}l Fijalkow, Pawel Gawrychowski, and Pierre
  Ohlmann.
\newblock The theory of universal graphs for infinite duration games.
\newblock {\em Log. Methods Comput. Sci.}, 18(3), 2022.
\newblock URL: \url{https://doi.org/10.46298/lmcs-18(3:29)2022}, \href
  {https://doi.org/10.46298/LMCS-18(3:29)2022}
  {\path{doi:10.46298/LMCS-18(3:29)2022}}.

\bibitem{condon}
A.~Condon.
\newblock The complexity of stochastic games.
\newblock {\em Inform. and Comput.}, 96(2):203--224, 1992.
\newblock \href {https://doi.org/10.1016/0890-5401(92)90048-K}
  {\path{doi:10.1016/0890-5401(92)90048-K}}.

\bibitem{Condon:1993}
Anne Condon.
\newblock On algorithms for simple stochastic games.
\newblock In Jin-Yi Cai, editor, {\em Advances in Computational Complexity
  Theory}, volume~13 of {\em DIMACS Series in Discrete Mathematics and
  Theoretical Computer Science}, pages 51--72. AMS, 1993.

\bibitem{DisserFriedmannHopp:2023}
Yann Disser, Oliver Friedmann, and Alexander~V. Hopp.
\newblock An exponential lower bound for {Z}adeh’s pivot rule.
\newblock {\em Math. Program.}, 199(1):865--936, 2023.

\bibitem{EhrenfeuchtMycielski}
Andrzej Ehrenfeucht and Jan Mycielski.
\newblock Positional strategies for mean payoff games.
\newblock {\em International Journal of Game Theory}, 8(2):109--113, 1979.

\bibitem{Eisenbrand2024}
Friedrich Eisenbrand, Matthieu Haeberle, and Neta Singer.
\newblock {An Improved Bound on Sums of Square Roots via the Subspace Theorem}.
\newblock In Wolfgang Mulzer and Jeff~M. Phillips, editors, {\em 40th
  International Symposium on Computational Geometry (SoCG 2024)}, volume 293 of
  {\em Leibniz International Proceedings in Informatics (LIPIcs)}, pages
  54:1--54:8, Dagstuhl, Germany, 2024. Schloss Dagstuhl -- Leibniz-Zentrum
  f{\"u}r Informatik.
\newblock URL:
  \url{https://drops.dagstuhl.de/entities/document/10.4230/LIPIcs.SoCG.2024.54},
  \href {https://doi.org/10.4230/LIPIcs.SoCG.2024.54}
  {\path{doi:10.4230/LIPIcs.SoCG.2024.54}}.

\bibitem{emerson_jutla_sistla}
E.~A. Emerson, C.~S. Jutla, and A.~P. Sistla.
\newblock On model-checking fragments of $\mu$-calculus.
\newblock In {\em Proceedings of the 5th International Conference on
  Computer-Aided Verification (CAV)}, volume 697 of {\em Lecture Notes in
  Comput. Sci.}, pages 385--396. Springer, 1993.
\newblock \href {https://doi.org/10.1007/3-540-56922-7_32}
  {\path{doi:10.1007/3-540-56922-7_32}}.

\bibitem{EmersonJutla}
E.~Allen Emerson and Charanjit~S. Jutla.
\newblock Tree automata, mu-calculus and determinacy.
\newblock In {\em Proceedings of the 32nd annual Symposium on Foundations of
  Computer Science (FOCS'91)}, pages 368--377, Washington, DC, USA, 1991. IEEE
  Computer Society.

\bibitem{fearnley2020unique}
John Fearnley, Spencer Gordon, Ruta Mehta, and Rahul Savani.
\newblock Unique end of potential line.
\newblock {\em Journal of Computer and System Sciences}, 114:1--35, 2020.

\bibitem{FearnyJurdzinskiSavani:2010}
John Fearnley, Marcin Jurdzi{\'n}ski, and Rahul Savani.
\newblock Linear complementarity algorithms for infinite games.
\newblock In {\em SOFSEM 2010: Theory and practice of computer science. 36th
  conference on current trends in theory and practice of computer science,
  \v{S}pindler{\r{u}}v Ml\'yn, Czech Republic, January 23--29, 2010.
  Proceedings}, pages 382--393. Berlin: Springer, 2010.
\newblock \href {https://doi.org/10.1007/978-3-642-11266-9_32}
  {\path{doi:10.1007/978-3-642-11266-9_32}}.

\bibitem{friedmann_zadeh}
O.~Friedmann.
\newblock A subexponential lower bound for {Z}adeh's pivoting rule for solving
  linear programs and games.
\newblock In {\em Proceedings of the 15th International Conference on Integer
  Programming and Combinatorial Optimization (IPCO)}, volume 6655 of {\em
  Lecture Notes in Comput. Sci.}, pages 192--206. Springer, 2011.
\newblock \href {https://doi.org/10.1007/978-3-642-20807-2_16}
  {\path{doi:10.1007/978-3-642-20807-2_16}}.

\bibitem{Friedmann2011b}
Oliver Friedmann, Thomas~Dueholm Hansen, and Uri Zwick.
\newblock Subexponential lower bounds for randomized pivoting rules for the
  simplex algorithm.
\newblock In {\em Proceedings of the Forty-third Annual ACM Symposium on Theory
  of Computing (STOC)}, pages 283--292, New York, NY, USA, 2011. ACM.
\newblock URL: \url{http://doi.acm.org/10.1145/1993636.1993675}, \href
  {https://doi.org/10.1145/1993636.1993675}
  {\path{doi:10.1145/1993636.1993675}}.

\bibitem{FriedmannHZ14a}
Oliver Friedmann, Thomas~Dueholm Hansen, and Uri Zwick.
\newblock Errata for: {A} subexponential lower bound for the random facet
  algorithm for parity games.
\newblock {\em CoRR}, abs/1410.7871, 2014.
\newblock URL: \url{http://arxiv.org/abs/1410.7871}, \href
  {https://arxiv.org/abs/1410.7871} {\path{arXiv:1410.7871}}.

\bibitem{GGJ76}
M.~R. Garey, R.~L. Graham, and D.~S. Johnson.
\newblock Some {NP}-complete geometric problems.
\newblock In {\em Eighth {A}nnual {ACM} {S}ymposium on {T}heory of {C}omputing
  ({H}ershey, {P}a., 1976)}, pages 10--22. Assoc. Comput. Mach., New York,
  1976.

\bibitem{GaertnerRuest:2005}
Bernd G{\"a}rtner and Leo R{\"u}st.
\newblock Simple stochastic games and {P}-matrix generalized linear
  complementarity problems.
\newblock In {\em Fundamentals of computation theory. 15th international
  symposium, FCT 2005, L\"ubeck, Germany, August 17--20, 2005. Proceedings.},
  pages 209--220. Berlin: Springer, 2005.
\newblock \href {https://doi.org/10.1007/11537311}
  {\path{doi:10.1007/11537311}}.

\bibitem{GroetschelLovaszSchrijver}
M.~Gr\"otschel, L.~Lov\'asz, and L.~Schrijver.
\newblock {\em Geometric Algorithms and Combinatorial Optimization}.
\newblock Springer, Heidelberg, 1994.
\newblock Second edition.

\bibitem{HansenIbsenJensen:2013}
Thomas~Dueholm Hansen and Rasmus Ibsen-Jensen.
\newblock The complexity of interior point methods for solving discounted
  turn-based stochastic games.
\newblock In {\em The nature of computation. Logic, algorithms, applications.
  9th conference on computability in Europe, CiE 2013, Milan, Italy, July 1--5,
  2013. Proceedings}, pages 252--262. Berlin: Springer, 2013.
\newblock \href {https://doi.org/10.1007/978-3-642-39053-1_29}
  {\path{doi:10.1007/978-3-642-39053-1_29}}.

\bibitem{Hodges}
Wilfrid Hodges.
\newblock {\em A shorter model theory}.
\newblock Cambridge University Press, Cambridge, 1997.

\bibitem{JurdzinskiMT22}
Marcin Jurdzinski, R{\'{e}}mi Morvan, and K.~S. Thejaswini.
\newblock Universal algorithms for parity games and nested fixpoints.
\newblock In Jean{-}Fran{\c{c}}ois Raskin, Krishnendu Chatterjee, Laurent
  Doyen, and Rupak Majumdar, editors, {\em Principles of Systems Design -
  Essays Dedicated to Thomas A. Henzinger on the Occasion of His 60th
  Birthday}, volume 13660 of {\em Lecture Notes in Computer Science}, pages
  252--271. Springer, 2022.
\newblock \href {https://doi.org/10.1007/978-3-031-22337-2\_12}
  {\path{doi:10.1007/978-3-031-22337-2\_12}}.

\bibitem{JurdzinskiSavani:2008}
Marcin Jurdzi{\'n}ski and Rahul Savani.
\newblock A simple {P}-matrix linear complementarity problem for discounted
  games.
\newblock In {\em Logic and theory of algorithms. 4th conference on
  computability in Europe, CiE 2008, Athens, Greece, June 15--20, 2008.
  Proceedings}, pages 283--293. Berlin: Springer, 2008.
\newblock \href {https://doi.org/10.1007/978-3-540-69407-6_32}
  {\path{doi:10.1007/978-3-540-69407-6_32}}.

\bibitem{Khachiyan}
L.~Khachiyan.
\newblock A polynomial algorithm in linear programming.
\newblock {\em Doklady Akademii Nauk SSSR}, 244:1093--1097, 1979.

\bibitem{Kretinsky2022comparison}
Jan K{\v{r}}et{\'\i}nsk{\'y}, Emanuel Ramneantu, Alexander Slivinskiy, and
  Maximilian Weininger.
\newblock Comparison of algorithms for simple stochastic games.
\newblock {\em Information and Computation}, 289:104885, 2022.

\bibitem{lasserre_book2009}
J.~B. Lasserre.
\newblock {\em Moments, Positive Polynomials and Their Applications}.
\newblock Ser. Optim. Appl. Imperial College Press, 2009.
\newblock \href {https://doi.org/10.1142/p665} {\path{doi:10.1142/p665}}.

\bibitem{LehtinenB20}
Karoliina Lehtinen and Udi Boker.
\newblock Register games.
\newblock {\em Log. Methods Comput. Sci.}, 16(2), 2020.
\newblock \href {https://doi.org/10.23638/LMCS-16(2:6)2020}
  {\path{doi:10.23638/LMCS-16(2:6)2020}}.

\bibitem{LehtinenPSW22}
Karoliina Lehtinen, Pawel Parys, Sven Schewe, and Dominik Wojtczak.
\newblock A recursive approach to solving parity games in quasipolynomial time.
\newblock {\em Log. Methods Comput. Sci.}, 18(1), 2022.
\newblock URL: \url{https://doi.org/10.46298/lmcs-18(1:8)2022}, \href
  {https://doi.org/10.46298/LMCS-18(1:8)2022}
  {\path{doi:10.46298/LMCS-18(1:8)2022}}.

\bibitem{liggett_lippman}
T.~M. Liggett and S.~A. Lippman.
\newblock Stochastic games with perfect information and time average payoff.
\newblock {\em SIAM Rev.}, 11(4):604--607, 1969.
\newblock \href {https://doi.org/10.1137/1011093} {\path{doi:10.1137/1011093}}.

\bibitem{MartinBorel}
Donald~A. Martin.
\newblock Borel determinacy.
\newblock {\em Annals of Mathematics}, 102(2):363--371, 1975.
\newblock URL: \url{http://www.jstor.org/stable/1971035}.

\bibitem{and-or-scheduling}
Rolf~H. M\"ohring, Martin Skutella, and Frederik Stork.
\newblock Scheduling with and/or precedence constraints.
\newblock {\em SIAM Journal on Computing}, 33(2):393--415, 2004.

\bibitem{MostkowskiGames}
A.W. Mostowski.
\newblock Games with forbidden positions.
\newblock Tech. Rep. 78, University of Gdansk, 1991.

\bibitem{nesterovnemirovskibook}
Y.~Nesterov and A.~Nemirovskii.
\newblock {\em Interior-point polynomial algorithms in convex programming},
  volume~13 of {\em Studies in Applied and Numerical Mathematics}.
\newblock SIAM, Philadelphia, PA, 1994.
\newblock \href {https://doi.org/10.1137/1.9781611970791}
  {\path{doi:10.1137/1.9781611970791}}.

\bibitem{KhachiyanPorkolab}
Lorant Porkolab and Leonid Khachiyan.
\newblock On the complexity of semidefinite programs.
\newblock {\em Journal of Global Optimization}, 10(4):351--365, 1997.
\newblock \href {https://doi.org/10.1023/A:1008203903341}
  {\path{doi:10.1023/A:1008203903341}}.

\bibitem{puri1995theory}
Anuj Puri.
\newblock {\em Theory of hybrid systems and discrete event systems}.
\newblock University of California at Berkeley, 1995.

\bibitem{puterman}
M.~L. Puterman.
\newblock {\em Markov Decision Processes: Discrete Stochastic Dynamic
  Programming}.
\newblock Wiley Ser. Probab. Stat. Wiley, Hoboken, NJ, 2005.

\bibitem{Ramana}
Motakuri~V. Ramana.
\newblock An exact duality theory for semidefinite programming and its
  complexity implications.
\newblock {\em Mathematical Programming}, 77:129--162, 1997.

\bibitem{scheiderer_helton_nie}
C.~Scheiderer.
\newblock Spectrahedral shadows.
\newblock {\em SIAM J. Appl. Algebra Geom.}, 2(1):26--44, 2018.
\newblock \href {https://doi.org/10.1137/17M1118981}
  {\path{doi:10.1137/17M1118981}}.

\bibitem{schewe2009parity}
Sven Schewe.
\newblock From parity and payoff games to linear programming.
\newblock In {\em Proceedings of the 34th International Symposium on
  Mathematical Foundations of Computer Science (MFCS)}, pages 675--686, 2009.

\bibitem{skomra_phd}
M.~Skomra.
\newblock {\em Tropical spectrahedra: {A}pplication to semidefinite programming
  and mean payoff games}.
\newblock PhD thesis, Universit{\'e} Paris-Saclay, 2018.
\newblock URL: \url{https://pastel.archives-ouvertes.fr/tel-01958741}.

\bibitem{SvenssonVorobyov:2006}
Ola Svensson and Sergei Vorobyov.
\newblock Linear complementarity and {P}-matrices for stochastic games.
\newblock In {\em Perspectives of Systems Informatics. 6th International Andrei
  Ershov Memorial Conference, PSI 2006}, volume 4378 of {\em Lecture Notes in
  Comput. Sci.}, pages 409--423. Springer, 2006.

\bibitem{Tarasov20082070}
Sergey~P. Tarasov and Mikhail~N. Vyalyi.
\newblock Semidefinite programming and arithmetic circuit evaluation.
\newblock {\em Discrete Applied Mathematics}, 156(11):2070 -- 2078, 2008.
\newblock In Memory of Leonid Khachiyan (1952-2005).

\bibitem{ZwickPaterson}
Uri Zwick and Mike Paterson.
\newblock The complexity of mean payoff games on graphs.
\newblock {\em Theor. Comput. Sci.}, 158(1{\&}2):343--359, 1996.

\end{thebibliography}

\end{document}